\documentclass[10pt]{article}
 \usepackage[latin1]{inputenc}
 \usepackage[dvips]{graphicx}
 \usepackage{wrapfig}
 \usepackage{amsmath}
 \usepackage{amsthm}
 \usepackage{amsfonts}
 \usepackage{amssymb}
 \usepackage{amsopn}
 \usepackage{layout}
 \usepackage{verbatim}
 \usepackage{alltt}
 \usepackage{XYpic}
 \input xy
 \xyoption{all}
 
\textwidth 6.5 in
\textheight 9in
\oddsidemargin - 1.5 pt
\evensidemargin 0pt
\topmargin -.3in

\newcounter{thm}[section]\setcounter{thm}{0}

\renewcommand{\thethm}{\arabic{section}.\arabic{thm}}

\newenvironment{con}{\par\medskip\noindent\refstepcounter{thm}
\bgroup{\hspace*{-0.15 cm}\bf{Conjecture}
\thethm.}\bgroup\it}{\egroup \egroup\par\medskip}

\newenvironment{thm}{\par\medskip\noindent\refstepcounter{thm}
\bgroup{\hspace*{-0.15 cm}\bf{Theorem}
\thethm.}\bgroup\it}{\egroup \egroup\par\medskip}

\newenvironment{lemma}{\par\medskip\noindent\refstepcounter{thm}
\bgroup{\hspace*{-0.15 cm}\bf{Lemma} \thethm.}\bgroup\it}{\egroup
\egroup\par\medskip}

\newenvironment{prop}{\par\medskip\noindent\refstepcounter{thm}
\bgroup{\hspace*{-0.15 cm}\bf{Proposition}
\thethm.}\bgroup\it}{\egroup \egroup\par\medskip}

\newenvironment{cor}{\par\medskip\noindent\refstepcounter{thm}
\bgroup{\hspace*{-0.15 cm}\bf{Corollary}
\thethm.}\bgroup\it}{\egroup \egroup\par\medskip}

\newenvironment{dfn}{\par\medskip\noindent\refstepcounter{thm}
\bgroup{\hspace*{-0.15 cm}\bf{Definition}
\thethm.}\bgroup}{\egroup \egroup\par\medskip}

\pagestyle{myheadings}
\markright{Seyfi T\"urkelli}

\begin{document}

  \author{Seyfi Türkelli}

  \title{Connected Components of Hurwitz Schemes and Malle's Conjecture}
  \maketitle

\footnotetext[1]{{\it 2000 Mathematics Subject Classification}. Primary ; Secondary }
\footnotetext[2]{{\it Key words and phrases.} Hurwitz Schemes, Branch covers, Malle's Conjecture }

\begin{abstract}
Let $\mathcal{Z}(X)$ be the number of degree-d extensions of $\mathbb{F}_q(t)$ with bounded discriminant and some
specified Galois group.  The problem of computing $\mathcal{Z}(X)$ can be related to a problem of
counting $\mathbb{F}_q$-rational points on certain Hurwitz spaces.  Ellenberg and Venkatesh used
this idea to develop a heuristic for the asymptotic behavior of $\mathcal{Z}'(X)$, the number of
-geometrically connected- extensions, and showed that this agrees with the conjectures of
Malle for function fields.  We extend Ellenberg-Venkatesh's argument to handle the more
complicated case of covers of $\mathbb{P}^1$ which may not be geometrically connected, and show that
the resulting heuristic suggests a natural modification to Malle's conjecture which
avoids the counterexamples, due to Kl\"uners, to the original conjecture.
\end{abstract}

\section{Introduction}

 Let $k$ be a  number field and let $N\leq S_n$ be a transitive group with one-point stabilizer $H\leq N$. By an $N$\emph{-extension}, we mean a Galois extension $K/k$ with Galois group $G(K/k)\cong N$. We will denote the discriminant of a finite extension $K/k$ by $D(K/k)$. It is well known that the number of extensions $K/k$ with $\textbf{N}^k_{\mathbb{Q}}D(K/k)<X$ is finite. In \cite{ma2}, Malle conjectures an asymptotic formula for the number of $N-$extensions $K/k$ of a fixed number field $k$ with $\textbf{N}^k_{\mathbb{Q}}D(K^H/k)<X$ where $K^H/k$ is the intermediate extension corresponding to $H$. In order to state Malle's conjecture precisely, we need to introduce some invariants of the group $N$.

 Let $g\in N$. We define the \emph{index} of $g$ to be the number $ind(g)=n-r$ where $r$ is the number of orbits of $g$ on set $\{1,2,...,n\}$. We define the \emph{index} of the group $N$, $ind(N)$, to be the minimum of all $\{ind(g)\mid g\in N^{\#} \}$ where $N^{\#}=N-\{1\}$. Finally, we define our first invariant $a(N)=1/ind(N)$.

 In order to define our second invariant, we let $\mathcal{C}(N)$ be the set of conjugacy classes of $N^{\#}$ whose index is equal to the index of $N$ (the ones with the minimal index). We define a $G(\bar{k}/k)$ action on set $\mathcal{C}(N)$ via the cyclotomic character $\chi$ as $g.c:=c^{\chi(g)}$ for $g\in N$ and $c\in \mathcal{C}(N)$. Now, we define our second invariant to be the positive integer $b(N,k)=|\mathcal{C}(N)/G(\overline{k}/k)|$.

 Fix $k$ and denote the number of $N$-extensions $K/k$ with $\textbf{N}^k_{\mathbb{Q}}D(K^H/k)<X$ by $\mathcal{Z}_N(k,X)$. \emph{Malle's conjecture} \cite{ma2} is stated as follows:

\begin{con}(Malle)\label{malle_con}
Let $k$ be a number field and $N$ be a transitive subgroup of $S_n$. Then,
$$\mathcal{Z}_N(k,X) \asymp {X^{a(N)}}(logX)^{b(N,k)-1}.$$
\end{con}

 Note that this conjecture is known for abelian groups and $k=\mathbb{Q}$ by the work of Wright \cite{wr}, for $N=S_3$ by the work of Davenport-Heilbronn \cite{da}, for $N=D_4,S_4,S_5$ and $k=\mathbb{Q}$ by the work of Bhargava, see \cite{bh1} and \cite{bh2}. In \cite{kl1}, Kl\"uners and Malle proved that for each positive $\epsilon >0$ there are positive constants $c_{\epsilon},C_{\epsilon}$ such that $c_{\epsilon} X^{a(N)} < \mathcal{Z}_N(\mathbb{Q},X) < C_{\epsilon} X^{a(N)+\epsilon}$ for any Nilpotent group $N$ given with its regular representation $N \hookrightarrow S_{|N|}$.

 Recently, Kl\"uners gave a counterexample to this conjecture, see \cite{kl2}. Indeed, take $k=\mathbb{Q}$ and $N=(\langle(123)\rangle\oplus \langle(456)\rangle)\rtimes \langle(14)(25)(36)\rangle\leq S_6$
which is isomorphic to $C_3\wr C_2$ where $C_r$ is the cyclic group with $r$ elements. Then, one can easily see that the conjecture predicts that $\mathcal{Z}_N(\mathbb{Q},X) \asymp X^{1/2}$. Kl\"uners shows that $\mathcal{Z}_N(\mathbb{Q},X) \asymp X^{1/2}logX$. He also points out that if one counts only the {\it regular} extensions {\it i.e.} extensions without an intermediate cyclotomic field, then one gets the asymptotic in the conjecture.

 For the rest of the paper, fix a $q$ which is coprime to $|N|$. One can state the conjecture for $k=\mathbb{F}_q (t)$ with evident modifications. In this setting, constant intermediate fields correspond to intermediate cyclotomic extensions. Subject to some heuristics on Hurwitz schemes, Ellenberg and Venkatesh \cite{el2} compute the size of the main term in the asymptotic for the number of $N$-extensions without constant subextensions, and obtain the analogue of Malle's conjecture.

 This suggests that Malle's conjecture may correctly predict the asymptotics for extensions without cyclotomic (constant) subextensions. Using the idea of Ellenberg-Venkatesh \cite{el2}, we will count $N$-extensions $K/\mathbb{F}_q (t)$ with a maximal constant subextension corresponding to a fixed normal subgroup $G$; we will call such an extension $N_G$\emph{-extension}. Then, we will take the maximum of the asymptotics of $N_G$-extensions as $G$ varies to get the asymptotic for $\mathcal{Z}_N(\mathbb{F}_q(t),X)$. We will propose a conjecture eliminating the Kl\"uners' counterexamples and compatible with the existing results.

 The idea is as follows: the category of $N_G$-extensions of $\mathbb{F}_q (t)$ is equivalent to the category of (connected but not necessarily geometrically connected) $N_G$-covers of $\mathbb{P}^1_{\mathbb{F}_q}$. Therefore, counting extensions is more or less equivalent to counting $\mathbb{F}_q$-rational points on the moduli space of certain covers of $\mathbb{P}^1$, namely Hurwitz schemes. Assuming the {\it heuristic}:
 \ \\

\label{heu}(A) \space Let $\mathcal{H}$ be a geometrically connected scheme of dimension $d$ defined over $\mathbb{F}_q$; then one has $|\mathcal{H}(\mathbb{F}_q)|=q^d$,
\ \\

\noindent we reduce the problem to one of counting irreducible components of Hurwitz spaces, and computing their dimension.

 More precisely, let $\mathcal{Z}_{G,N}(\mathbb{F}_q,X)$ be the number of $N_G$-extensions $K/\mathbb{F}_q(t)$ with $Norm D(K^H/\mathbb{F}_q(t))<X$. We will prove:

 \begin{thm}\label{main_thm}
 Assume (A). If $G$ splits in $N$, then $$\mathcal{Z}_{G,N}(\mathbb{F}_q,X)\asymp X^{a(G)}(logX)^{b(G,N,\mathbb{F}_q)-1} $$ where $b(G,N,\mathbb{F}_q)$ is an explicitly computable positive integer.
 \end{thm}

 As an immediate consequence, we have the result of Ellenberg-Venkatesh:

 \begin{cor}\label{ell_ven}
 Assume (A). Then, $\mathcal{Z}_{N,N}(\mathbb{F}_q,X)\asymp X^{a(N)}(logX)^{b(N,\mathbb{F}_q)-1}.$
 \end{cor}

 In section 4, we will conjecture an asymptotic for the number of $N$-extensions of a global field. As evidence in the favor of our modification of Malle's conjecture, we show that our version is not contradicted by Kl\"uners' counterexample from \cite{kl2}.

 \begin{cor}\label{counterexample}
 Assume (A). Let $N=(\langle(123)\rangle\oplus \langle(456)\rangle)\rtimes \langle(14)(25)(36)\rangle\leq S_6$ and $q=2$ $(mod3)$. Then, we have $\mathcal{Z}_{N}(\mathbb{F}_q,X)\asymp X^{1/2}logX.$
 \end{cor}

 We want to count branch covers of $\mathbb{P}^1_{\mathbb{F}_q}$ corresponding to $N_G$-extensions of $\mathbb{F}_q(t)$, namely $N_G$\emph{-covers}. These covers are parameterized by certain $\mathbb{F}_q$-rational points of Hurwitz schemes of $(G,N)$-covers of $\mathbb{P}^1$. In section 2, we will state well known facts about these moduli schemes. In particular, we will introduce discrete invariants, so called Nielsen tuples, parameterizing "almost" geometrically connected components of Hurwitz schemes and we will determine the components parameterizing $N_G$-covers. In section 3, using the heuristic, we will reduce the problem of counting covers to counting Nielsen tuples. In section 4, we prove Theorem \ref{main_thm} and we conjecture an asymptotic for $\mathcal{Z}_N(k,X)$ where $k$ is a global field. Finally, we give some examples, as corollaries of Theorem \ref{main_thm}, supporting our conjecture.
\ \\

 {\it Acknowledgments}. \indent The author is very grateful to Jordan Ellenberg for his excellent guidance in preparation of this paper, and thanks to J\"urgen Kl\"uners for his very useful comments and pointing out an important mistake in an earlier version of the paper.


\section{Hurwitz Schemes of $(G,N)$-covers}

 For the rest of the paper, fix a transitive subgroup $N\leq S_n$ and a normal subgroup $G$ of $N$ of cardinality $m$ with cyclic quotient $N/G$. Also, fix a one-point stabilizer $H\leq N$. Let $q$ be such that $(q,|N|)=1$.
 
 In this paper, we will prove the asymptotic formula under the assumption that $G$ splits in $N$, that is $N=G\rtimes T$ for some cyclic group $T$. We hope to treat the general case in a future paper. So, once and for all, fix a cyclic complement $T$ of $G$ in $N$ and an element $\tau\in T$ generating $T$. Let $C_T$ be the kernel of the projection $T\tilde{\rightarrow} N/G\twoheadrightarrow N/GCen_N(G)$, $T':=T/C_T\cong N/GCen_N(G)$ and $\tau_1\in T'$ be the image of $\tau$. Set $|T|=d$ and $|T'|=d'$.  

 In this section, first, we will introduce $(G,N)$-\emph{covers} of $\mathbb{P}^1/\mathbb{F}_q$ and discuss basic facts about their moduli spaces, namely Hurwitz schemes. Secondly, we will discuss the decomposition of Hurwitz schemes into their almost-geometrically-connected components. Finally, we will determine the components which parameterize the covers we want to count, namely $N_G$-\emph{covers}. Most of the work we summarize in this section is due to Fried \cite{fr} and Wewers \cite{we}.

\begin{dfn}\label{defn_gn_extn}
 An $N_G$-\emph{extension} of $\mathbb{F}_q(t)$ is an $N$-extension $K/\mathbb{F}_q(t)$ such that $K^G/\mathbb{F}_q(t)$ is the maximal constant subextension of $K/\mathbb{F}_q(t)$ where $K^G$ is the subfield of $K$ corresponding to group $G$.
\end{dfn}

 Let $\rho: N\rightarrow S_m$ be the representation given by the action of $N$ on the left coset space $N/T$. Note that, for $g\in G$, $ind(g)$ will denote the index of $g\in S_n$, not the index of $\rho(g)$ in $S_m$.

 Let $Y/\mathbb{F}_q$ be a geometrically connected smooth curve and $f:Y\rightarrow \mathbb{P}^1/\mathbb{F}_q$ be a finite branched cover of degree $m=|G|$. Note that the field of moduli of the cover $(Y,f)$ is also the field of definition, see \cite[Corollary 3.3]{de1}. Let $K/\mathbb{F}_q$ be the field of definition of the cover $(Y,f)$ and let $f_K:Y_K\rightarrow \mathbb{P}^1_K$ be one of its models over $K$ with branch locus $B$ where $B$ is a proper closed reduced subscheme of $\mathbb{P}^1_K$ of degree $k$. Consider pairs $(b,\sigma)$ where $b$ is a geometric point on $\mathbb{P}^1/\mathbb{F}_q$  and $\sigma:\{1,...,m\} \rightarrow f^{-1}(b)$ is a bijection such that
 \begin{enumerate}
 \item[i.] The image of the homomorphism $\tilde{\phi}:\Pi_1(\mathbb{P}^1_K\backslash B,b)\rightarrow S_m$ induced by $f$ and $\sigma$ is included in $\rho(N)$;
 \item[ii.] The image of $\phi:\Pi_1(\mathbb{P}^1_{\bar{\mathbb{F}}_{q}}\backslash B,b)\rightarrow S_m$ is $\rho(G)$
 \end{enumerate}
where $f^{-1}(b)$ denotes the geometric fiber. Two pairs $(b,\sigma)$ and $(b',\sigma ')$ are $N$-\emph{equivalent} if there is a path $\gamma\in \Pi_1(\mathbb{P}^1_K\backslash B;b,b')$ such that $\sigma^{-1}\tilde{\gamma}\sigma '\in \rho(N)$ where $\tilde{\gamma}:f^{-1}(b')\rightarrow f^{-1}(b)$ is the bijection obtained from lifting $\gamma$ to $Y$.

\begin{dfn}
 A \emph{$(G,N)$-structure on $f$} is the $N$-equivalence class $\mathcal{S}=[(b,\sigma)]_N$ of a pair $(b,\sigma)$.   A $(G,N)-cover$ is a pair $(f,\mathcal{S})$ where $f:Y\rightarrow \mathbb{P}^1/\mathbb{F}_q$ is a cover as above and $\mathcal{S}$ is a $(G,N)-$structure on $f$.
\end{dfn}

   We remark that $f:Y\rightarrow\mathbb{P}^1/\mathbb{F}_q$ is not a Galois cover in general but $\bar{f}=f\otimes \bar{\mathbb{F}}_q:\bar{Y}\rightarrow \mathbb{P}^1_{\bar{\mathbb{F}}_q}$ is Galois, see \cite[Remark 3.4.4]{we}.

 Abusing the notation, by $Im(\tilde{\phi})$ ( or $Im(\phi)$ ) we mean homomorphic image of $\tilde{\phi}$ ( or $Im(\phi)$ ) in $S_n$. In other words, a $(G,N)$-cover is a cover of $\mathbb{P}^1/\mathbb{F}_q$ with \emph{geometric fundamental group}, $Im(\phi)$, $G$ and with \emph{arithmetic fundamental group}, $Im(\tilde{\phi})$, at most $N$. Note that we want to count covers with arithmetic fundamental group exactly $N$.

\begin{dfn}
 An $N_G-cover$ is a $(G,N)$-cover $(f,\mathcal{S})$ with $Im(\tilde{\phi})=N$.
\end{dfn}

 Note that given a geometric point $b$ on $\mathbb{P}^1/{\mathbb{F}_q}$ one can represent any $(G,N)$-structure $[(b',\sigma')]_N$ by $[(b,\sigma)]_N$ for some $\sigma$ because $\mathbb{P}^1/\mathbb{F}_q$ is path connected.

\begin{dfn}
 A morphism between two $(G,N)$-covers $(f,[(b,\sigma)]_N)$ and $(f',[(b,\sigma')]_N)$ is a morphism $\psi:Y\rightarrow Y'$ of schemes over $\mathbb{F}_q$ such that $f=f'\psi$ and $[(b,\sigma)]_N=[(b,\psi^{-1}\sigma')]_N$.
\end{dfn}

 \begin{thm}\label{thm_modspace}
There exists a smooth scheme $\mathcal{H}_{G,N}$ over $\mathbb{Z}[1/|G|]$ which is a coarse moduli scheme for $(G,N)$-covers of $\mathbb{P}^1$. Moreover, the fibers of the natural map
$$\xymatrix{\Delta : \{ (G,N)\text{-covers of } \mathbb{P}^1 \text{ defined over } \mathbb{F}_q \}/ \cong \ar[r] &  \mathcal{H}_{G,N}(\mathbb{F}_q)} $$
has size at most $|Cen_N(G)|$.
\end{thm}

\begin{proof}
 Wewers carries out a detailed construction of $\mathcal{H}_{G,N}$ in \cite[Section 4]{we}. Let $P\in \mathcal{H}_{G,N}(\mathbb{F}_q)$. By section $4.1.3$ of \cite{we} and \cite{de1}, the obstruction to $P$ arising from a cover lies in the second cohomology group $H^2(\mathbb{F}_q,Cen_N(G))$. Since $\mathbb{F}_q$ has cohomological dimension one, there is no obstruction. By Lemma \ref{lemma_par} below, $\Delta^{-1}(P)$ is parameterized by $H^1(\mathbb{F}_q, Cen_N(G))$. This cohomology group has size at most $|Cen_N(G)|$.
\end{proof}

\begin{lemma}\label{lemma_par}
The isomorphism classes of $(G,N)$-covers of $\mathbb{P}^1$
parameterized by the point $P\in \mathcal{H}_{G,N}(\mathbb{F}_q)$
is in one-to-one correspondence with the cohomology group
$H^1(\mathbb{F}_q,Cen_N(G))$.
\end{lemma}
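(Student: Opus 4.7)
The plan is Galois descent: since $\mathcal{H}_{G,N}$ is a \emph{coarse} moduli space, the fiber $\Delta^{-1}(P)$ consists of all $\mathbb{F}_q$-forms of a fixed geometric isomorphism class of $(G,N)$-covers, and such forms are classified by $H^1(\mathrm{Gal}(\bar{\mathbb{F}}_q/\mathbb{F}_q), A)$ where $A$ is the geometric automorphism group, provided the fiber is non-empty. The proof of Theorem \ref{thm_modspace} already established non-emptiness via vanishing of the descent obstruction in $H^2(\mathbb{F}_q, Cen_N(G))$ (using that $\mathbb{F}_q$ has cohomological dimension one). So the task reduces to identifying $A \cong Cen_N(G)$.

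This identification is the main substantive step. Since $N = G \rtimes T$ with $G \cap T = \{1\}$, the group $G$ acts freely and transitively on $N/T$, so $\rho|_G$ is the regular representation, and the cover $f: Y \to \mathbb{P}^1_{\bar{\mathbb{F}}_q}$ is $G$-Galois. Any geometric deck transformation is therefore right multiplication by some $z \in G$ on the fiber $f^{-1}(b) \cong G$ (via the marking $\sigma$). An automorphism $\psi$ of the $(G,N)$-cover must additionally satisfy $\sigma^{-1}\psi^{-1}\sigma \in \rho(N)$. Writing $\rho(g'\tau)(g) = g'(\tau g \tau^{-1})$ for $g' \in G, \tau \in T$, the condition $gz^{-1} = g'(\tau g \tau^{-1})$ for all $g \in G$ forces $g' = z^{-1}$ and $z^{-1}\tau \in Cen_N(G)$. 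Via the unique decomposition $N = GT$, the assignment $\psi \mapsto z^{-1}\tau$ defines a group isomorphism between automorphisms of $(f, \mathcal{S})$ and $Cen_N(G)$.

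With $A \cong Cen_N(G)$ in hand, the bijection proceeds along familiar lines. Fix a reference cover $(f_0, \mathcal{S}_0)$ in $\Delta^{-1}(P)$ defined over $\mathbb{F}_q$. For any other $(f, \mathcal{S}) \in \Delta^{-1}(P)$, choose a $\bar{\mathbb{F}}_q$-isomorphism $\alpha: (f_0)_{\bar{\mathbb{F}}_q} \cong f_{\bar{\mathbb{F}}_q}$ of $(G,N)$-covers and associate the $1$-cocycle $\mathrm{Frob} \mapsto \alpha^{-1} \cdot \mathrm{Frob}(\alpha) \in Cen_N(G)$. The cohomology class is independent of $\alpha$, and two covers yield the same class iff they are $\mathbb{F}_q$-isomorphic, giving injectivity. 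Surjectivity follows from standard descent: given a cocycle, twist $(f_0, \mathcal{S}_0)_{\bar{\mathbb{F}}_q}$ by it and descend back to $\mathbb{F}_q$ (again using cohomological dimension one), producing a $(G,N)$-cover in $\Delta^{-1}(P)$ realizing the given class. The only non-formal input is the group-theoretic identification of $A$; the remainder is formal descent in the favorable setting of a finite field.
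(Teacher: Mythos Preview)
Your proof is correct and takes essentially the same Galois-descent approach as the paper: associate to each $\mathbb{F}_q$-form the cocycle $x \mapsto \alpha^{-1}\,{}^x\!\alpha$ (the paper writes $(\psi^{-1}\psi^x)^\sigma$) valued in the geometric automorphism group, which is $Cen_N(G)$. The only difference is that the paper cites Wewers \cite[Lemma 3.4.2]{we} for the identification $\Aut((\bar f,\mathcal{S}))\cong Cen_N(G)$ and leaves bijectivity as a routine check, whereas you supply an explicit argument for the former via the regular action of $G$ on $N/T$ and spell out the latter.
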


\begin{proof}
 Let $P\in \mathcal{H}_{G,N}(\mathbb{F}_q)$ and let $(f,[b,\sigma]_N)$ be a $(G,N)$-cover defined over $\mathbb{F}_q$ corresponding to the point $P$. Let $(f_1,[b,\sigma_1])$ be another one. Then, there is an isomorphism of $(G,N)$-covers $\psi:\bar{Y}\rightarrow \bar{Y}_1$ with $\bar{f}=\bar{f}_1\psi$ and $[b,\sigma]_N=[b,\psi^{-1}\sigma_1]_N$ over $\bar{\mathbb{F}}_q$ where $\bar{f}_i=f_i\times _{\mathbb{F}_q}\bar{\mathbb{F}}_q$ for $i=1,2$.
\noindent This induces a map
$$\delta _{\psi}: G_{\mathbb{F}_q}\rightarrow N
\text{\indent  defined by \indent } x\mapsto (\psi^{-1}\psi^x)^\sigma .$$

 It is easy to see that $\psi^{-1}\psi^x\in Aut((\bar{f},[b,\sigma]_N))$ for all $x\in G_{\mathbb{F}_q}$. Since $Aut((\bar{f},[b,\sigma]_N))\cong Cen_N(G)$,  Im($\delta_\psi$)$\subseteq Cen_N(G)$ \cite[Lemma 3.4.2]{we}. One checks that $\delta _{\psi}$ is a cocycle and $\delta$ defines the desired correspondence.

\end{proof}

 Later, we will make use of the important fact that $\mathcal{H}_{G,G}\cong \mathcal{H}_G$ as schemes defined over $\mathbb{F}_q$ where $\mathcal{H}_G$ is the coarse moduli scheme of geometrically connected Galois $G$-covers of $\mathbb{P}^1$, see \cite[proposition 4.2.2]{we}.

 Let $Ni_k(G)$ be the set of $k$-tuples $\bar{g}=(g_1,...,g_k)$ generating $G$ and satisfying $g_1...g_k=1$. Note that $N$ acts on the set of such $k$-tuples by component-wise-conjugation; denote the orbit of $\bar{g}$ in this action by $[\bar{g}]_N$. Denote the set of such $N$-orbits of $k$-tuples by $Ni_k(G,N)$ and define the set of $G-N-$\emph{Nielsen classes} to be the set $Ni(G,N):=\bigcup_k Ni_k(G,N)$.

 Given a $G-N-$Nielsen class $[\bar{g}]_N\in Ni_k(G,N)$ and a proper closed reduced subscheme $B$ of $\mathbb{P}^1_{\bar{\mathbb{F}}_{q}}$ of degree $k$, one has surjective homomorphisms $\pi_1(\mathbb{P}_{\bar{\mathbb{F}}_q}^1\backslash B,b_0)\rightarrow G$ modulo conjugation with elements of $N$. After fixing a set of generators of $\pi_1(\mathbb{P}_{\bar{\mathbb{F}}_q}^1\backslash B,b_0)$, one gets a one-to-one correspondence between these homomorphisms and $(G,N)$-covers$/\bar{\mathbb{F}}_q$ with branch locus $B$. Thus, such a orbit $[\bar{g}]_N\in Ni(G,N)$ and branch locus $B$ induces a $(G,N)$-cover and vice versa.

 Let $\mathcal{H}_{G,N,k}$ be the moduli space of $(G,N)$-covers with degree-$k$ branch locus and $U_k=\mathbb{P}^k\backslash \delta_k$ where $\delta_k$ is the discriminant locus. Then, the natural map $\pi: \mathcal{H}_{G,N,k}\rightarrow U_k$ taking a cover to its branch locus is \'etale. On the other hand, if $B$  is a geometric point of $U_k/\mathbb{F}_q$, then the braid group $\pi_1(\bar{U}_k,B)$, the geometric fundamental group of $U_k$, can be written in standard representation with generators $Q_1,...,Q_{k-1}$.

 In summary, there is a bijection between $Ni_k(G,N)$ and the geometric fiber $\pi^{-1}(B)$ \cite[Proposition 4.3.1]{we}. This bijection induces a well-known action of the braid group $\pi_1(\bar{U}_k,B)$ on $Ni_k(G,N)$ which is given by $$Q_i[(g_1,...,g_k)]_N:=[(g_1,...,g_i g_{i+1} g_i^{-1},g_i,...g_k)]_N$$ and the geometrically connected components of $\mathcal{H}_{G,N,k}$ correspond to the braid group orbits on $Ni_k(G,N)$.

 Given a $k$-tuple $\bar{C}=(C_1,...,C_k)$ of $G$-conjugacy classes, one can consider its orbit $[\bar{C}]_N$ under conjugation by $N$. Define $Ni(\bar{C})$ to be the set of $\bar{g}\in Ni_k(G)$ such that, after some permutation of the entries of $\bar{g}$, $g_i\in C_i$ for all $i$. Now, define $Ni([\bar{C}]_N)$ to be the set of $N-$orbits of elements of $Ni(\bar{C})$. Clearly, $Ni([\bar{C}]_N)$ is closed under the braid group action. Therefore, there is a closed subscheme (possibly empty) of $\mathcal{H}_{G,N}$ corresponding to the $N$-orbit $[\bar{C}]_N$, denoted by $\mathcal{H}_{[\bar{C}]_N}$, which parameterizes $(G,N)$-covers with ramification data $[\bar{C}]_N$. Thus, we have the decomposition $\mathcal{H}_{(G,N)}=\bigsqcup_{[\bar{C}]_N}\mathcal{H}_{[\bar{C}]_N}$ over $\bar{\mathbb{F}}_q$. For  more details of the discussion above, we refer to \cite[Section 4.3]{we}.

 In summary, we have:

 \begin{thm}\label{thm_decomp}
 \cite[Section 4]{we} For each $N$-orbit $[\bar{C}]_N$ of a tuple of conjugacy classes $\bar{C}$, there is a Hurwitz scheme $\mathcal{H}_{[\bar{C}]_N}/\bar{\mathbb{F}}_q$ which is a coarse moduli scheme for $(G,N)$-covers $Y\rightarrow \mathbb{P}^1_{\bar{\mathbb{F}}_q}$ with ramification data $[\bar{C}]_N$. A Galois transformation $\sigma\in G_{\mathbb{F}_q}$ acts on $\mathcal{H}_{[\bar{C}]_N}$ by  $\mathcal{H}^\sigma_{[\bar{C}]_N}=\mathcal{H}_{[\bar{C}]_N^\sigma}$.

 Moreover, the map $\pi: \mathcal{H}_{[\bar{C}]_N}\rightarrow U_k$ sending a cover to its branch locus is \'etale and geometric points of the fiber over $B\in U_k(\bar{\mathbb{F}}_q)$ can be identified with $Ni([\bar{C}]_N)$. The induced action of the monodromy group $\Pi_1(U_k)$ on $Ni([\bar{C}]_N)$ is given by
$$Q_i(g_1,...,g_k)=(g_1,..,g_i g_{i+1} g_i^{-1},g_i,..,g_k)$$
and, thus geometrically connected components of $\mathcal{H}_{[\bar{C}]_N}$ correspond to $\Pi_1(U_k)$-orbits of $Ni([\bar{C}]_N)$.
\end{thm}

 Note that the arithmetic fundamental group of a $(G,N)$-cover is a subgroup of $N$ containing $G$ and we want to count the ones defined over $\mathbb{F}_q$ whose arithmetic fundamental group is exactly $N$, namely $N_G$ covers defined over $\mathbb{F}_q$. Note also that an $N_G$-cover defined over $\mathbb{F}_q$ corresponds to a $N_G$-extension of $\mathbb{F}_q(t)$ and vice versa.

 Let $\mathcal{H}_{G,N}(\mathbb{F}_q)^N$ be the subset of $\mathcal{H}_{G,N}(\mathbb{F}_q)$ consisting of all the points parameterizing $N_G$-covers defined over $\mathbb{F}_q$; that is
$$\mathcal{H}_{G,N}(\mathbb{F}_q)^N:=\Delta(\{N_G\text{-covers of } \mathbb{P}^1 \text{ defined over }\mathbb{F}_q \}/\cong)$$
\noindent where $\Delta$ is the natural map in Theorem
\ref{thm_modspace}. Below, we determine the components whose
$\mathbb{F}_q$-rational points parameterize $N_G$-covers defined
over $\mathbb{F}_q$.

  For any given subgroup $N'\subseteq N$ containing $G$, the natural map
$$\Lambda_N^{N'}:\mathcal{H}_{G,N'}\rightarrow \mathcal{H}_{G,N} \text{\indent induced by \indent} (f,[(b,\sigma)]_{N'})\mapsto (f,[(b,\sigma)]_{N})$$
\noindent is an \'etale cover of degree $|Out_N(N')|=| N/N'Cen_N(N')|$ and the map $\pi': \mathcal{H}_{G,N'}\rightarrow U_k$ factors through $\pi: \mathcal{H}_{G,N}\rightarrow U_k$, see \cite[Section 6.1]{fr} and \cite[Section 4.2]{we}. Note that the image of $\mathcal{H}_{[\bar{C}]_{N'}}$ under $\pi$ is $\mathcal{H}_{[\bar{C}]_N}$. In fact, if $\mathcal{H}_{[\bar{C}]_{N'}}$ is connected and defined over $\mathbb{F}_q$, then $\Lambda_N^{N'}$ is a Galois cover with automorphism group isomorphic to $Out_N(N')$ where $Out_N(N')$ denotes the image of $N$ in $Out(N')$.

 In particular, by taking $N'=G$ we obtain an \'etale cover $\Lambda:\mathcal{H}_{G}\rightarrow \mathcal{H}_{G,N}/\mathbb{F}_q$ of degree $d'=|T'|=|T/C_T|$ (recall that there is a canonical isomorphism $\mathcal{H}_{G}\cong\mathcal{H}_{G,G}/\mathbb{F}_q$). Given a geometric point $P$ in $\mathcal{H}_{G,N}/\mathbb{F}_q$, $C_T$ acts on the geometric fiber $\Lambda^{-1}(P)$ trivially and this induces a sharply transitive action of $T'$ on $\Lambda^{-1}(P)$.

 Let $P\in \mathcal{H}_{G,N}(\mathbb{F}_q)$ and $Q\in \Lambda^{-1}(P)$. Then, the geometric fiber $\Lambda^{-1}(P)$ is defined over $\mathbb{F}_q$ and for all $\sigma\in G_{\mathbb{F}_q}$ there exists a unique $\zeta(\sigma)\in T'$ such that $Q^{\sigma}=Q^{\zeta(\sigma)}\in \Lambda^{-1}(P)$. One can easily see that $\zeta\in H^1(\mathbb{F}_q,T')$ (here the action of the absolute Galois group $G_{\mathbb{F}_q}$ on $T'$ is trivial so indeed $\zeta\in Hom(G_{\mathbb{F}_q},T')$) and it is independent of the choice of $Q\in \Lambda^{-1}(P)$. Therefore, $\Lambda^{-1}(P)\subseteq \mathcal{H}^{\zeta}_G(\mathbb{F}_q)$ where $\mathcal{H}^{\zeta}_G$ is the twist of $\mathcal{H}_G$ via the composition of $\zeta$ with the embedding of $T'$ in $Aut(\mathcal{H}_G/ \mathcal{H}_{G,N})$. On the other hand, given a point $Q\in \mathcal{H}^{\zeta}_G(\mathbb{F}_q)$ for some $\zeta\in H^1(\mathbb{F}_q,T')$,  $P:=\Lambda(Q)\in \mathcal{H}_{G,N}(\mathbb{F}_q)$. Thus, we have $$\Lambda^{-1}(\mathcal{H}_{G,N}(\mathbb{F}_q))=\bigsqcup_{\zeta\in H^1(\mathbb{F}_q,T')}\mathcal{H}^{\zeta}_G(\mathbb{F}_q).$$

 Now, we want to determine the cocycles $\zeta\in H^1(\mathbb{F}_q,T')$ for which the twists $\mathcal{H}^{\zeta}_G(\mathbb{F}_q)$ induce $N_G$-covers via $\Lambda$. So, let $|T'|=d'$ and let $e$ be a positive integer with $1\leq e\leq d'$. We denote the $1$-cocycle sending $Frob_q$ to $\tau_1^e$ by $\zeta_e\in H^1(\mathbb{F}_q,T')$ where $\tau_1$ is the image of $\tau\in T$ under the projection $T\twoheadrightarrow T'$. The following proposition tells us the schemes we should consider.

\begin{prop}\label{prop_dec}
We have the following decomposition
$$\Lambda^{-1}(\mathcal{H}_{G,N}(\mathbb{F}_q)^N)= \bigsqcup_{\substack{1\leq e \leq d' \\  (e,d')=1}}\mathcal{H}^{\zeta_e}_G(\mathbb{F}_q).$$
\noindent
\end{prop}

\begin{proof}
 Giving a $(G,N)$-cover defined over $\mathbb{F}_q$ with branch locus $B$ is equivalent to giving the following diagram of fundamental groups:

 $$\xymatrix{ 1\ar[r] & \Pi_1(\mathbb{P}^1_{\bar{\mathbb{F}}_{q}}\backslash
B,b)\ar[r]\ar@{>>}[d]_{\phi} &
\Pi_1(\mathbb{P}^1_{\mathbb{F}_{q}}\backslash
B,b)\ar[r]\ar[d]_{\tilde{\phi}} &
G(\bar{\mathbb{F}}_{q}/\mathbb{F}_{q})\ar[r]\ar[d]_{\bar{\phi}} & 1
\\ 1 \ar[r] & G \ar[r] & N \ar[r] & T }.$$

\noindent The cover corresponding to the above diagram is actually an $N_G$-cover if and only if $\bar{\phi}$ is surjective \emph{i.e.} $\bar{\phi}(Frob_q)=\tau^e$ for some positive integer $e$ with $1\leq e \leq d$ and $(e,d)=1$.

 Let $P\in \mathcal{H}_{G,N}(\mathbb{F}_q)^N$  and let $Q\in \Lambda^{-1}(P)$. Then, by the definition of $\Lambda$, $$Q^{Frob_q}=Q^{\tau_1^e}$$ for some unique integer $e$ with $$1\leq e \leq d' \text{ \indent and\indent } (e,d')=1.$$ Therefore, $Q\in \mathcal{H}^{\zeta_e}_G(\mathbb{F}_q)$ and  $\Lambda^{-1}(\mathcal{H}_{G,N}(\mathbb{F}_q)^N)\subseteq \bigsqcup_{\substack{1\leq e \leq d' \\  (e,d')=1}}\mathcal{H}^{\zeta_e}_G(\mathbb{F}_q).$ The difficult part is to prove the inequality "$\supseteq$"; that's what we do below.

  Let $e$ be a positive integer with $(e,d')=1$ and $1\leq e\leq d'$ and let $Q\in \mathcal{H}^{\zeta_e}_G(\mathbb{F}_q)$. We want to show that $Q\in \Lambda^{-1}(\mathcal{H}_{G,N}(\mathbb{F}_q)^N).$


 In other words, we want to show that there exists an $N_G$-cover $(f,\mathcal{S})$ such that $\Delta((f,\mathcal{S}))=P$ where  $P=\Lambda(Q)$ and $$\xymatrix{\Delta : \{ (G,N)\text{-covers of } \mathbb{P}^1 /\mathbb{F}_q \}/ \cong \ar[r] &  \mathcal{H}_{G,N}(\mathbb{F}_q)} $$ is the natural map defined in Theorem \ref{thm_modspace}.

  Let $(f_0,\mathcal{S}_0)\in \Delta^{-1}(P)$. Then $(f_0,\mathcal{S}_0)$ is an $M_G$-cover for some subgroup $M\subseteq N$ containing $G$. Let $T_0\subseteq T$ be the complement of $G$ in $M$, that is $M=G\rtimes T_0$. $(f_0,\mathcal{S}_0)$ induces a diagram as above, which is induced by a surjective morphism of groups $\tilde{\phi}_0: \Pi_1(\mathbb{P}^1_{\mathbb{F}_{q}}\backslash B,b)\twoheadrightarrow M$. Then, $\bar{\phi}_0(Frob_q)=\tau^{e'}$ for some $e'$ with $1\leq e'\leq d$. This implies that $Q^{Frob_q}=Q^{\tau^{e'}}$. On the other hand, $Q^{Frob_q}=Q^{\tau^e}$ since $Q\in \mathcal{H}^{\zeta_e}_G(\mathbb{F}_q)$. Therefore, $Q^{\tau^{e-e'}}=Q$ and $\tau^{e-e'}\in C_T$, that is $\tau^e=\tau^{e'}$ in $T/C_T$. Since $(e,d')=1$, $\tau^e$ generates $T/C_T\cong N/GCen_N(G)$ and so does $\tau^{e'}$. Thus, we have natural isomorphisms:
$$N/Cen_N(G)\cong M/Cen_{M}(G) \indent \text{and} \indent N/GCen_N{G}\cong M/GCen_{M}(G).$$

 Recall that $|T/C_T|=d'$ and $|C_T|=d''$. One can easily see that $(e+ad',d)=1$ for some $a$ with $0\leq a \leq d''-1$. So, let $e''=e+ad'<d$ be such that $(e'',d)=1$. Note that $\tau^e=\tau^{e'}=\tau^{e''}$ in $N/GCen_N(G)$.

 Let $\phi:=\phi': \Pi_1(\mathbb{P}^1_{\bar{\mathbb{F}}_{q}}\backslash B,b)\twoheadrightarrow G$ be a morphism induced by the $M_G$-cover. Define $\bar{\phi}:G_{\mathbb{F}_{q}}\twoheadrightarrow N/G$ by $Frob_q\mapsto \tau^{e''}G$. Now, we have the following commutative diagram:

$$\xymatrix{
  1\ar[r] &  \Pi_1(\mathbb{P}^1_{\bar{\mathbb{F}}_{q}}\backslash B,b)\ar[r]\ar@{>>}[d]_{[\phi']}\ar@{>>}[ddr]^{\phi} & \Pi_1(\mathbb{P}^1_{\mathbb{F}_{q}}\backslash B,b)\ar[r]\ar@{>>}[d]_{[\tilde{\phi}']}\ar@{-->>}[ddr]^{\tilde{\phi}} & G_{\mathbb{F}_{q}}\ar[r]\ar@{>>}[d]_{[\bar{\phi}']}\ar@{>>}[ddr]^{\bar{\phi}} & 1 &
\\
 1\ar[r]  & G/Cen(G)\ar[r] & N/Cen_N(G)\ar[r] & N/GCen_N(G)\ar[r] & 1 &
\\
 & 1\ar[r] & G\ar[r]\ar@{>>}[ul] & N\ar[r]\ar@{>>}[ul] & N/G\ar[r]\ar@{>>}[ul] & 1
}$$

\noindent where $[\tilde{\phi}']$ is the map induced by $\tilde{\phi}'$ and the canonical isomorphism $M/Cen_{M}(G)\cong N/Cen_N(G)$. The obstruction to the existence of $\tilde{\phi}$ lies in $H^2(\mathbb{F}_q,Cen_N(G))$ \cite[Theorem 4.3]{de1}. Since $\mathbb{F}_q$ has cohomological dimension $1$, there exists such a lift $\tilde{\phi}:\Pi_1(\mathbb{P}^1_{\mathbb{F}_{q}}\backslash B,b)\rightarrow N$. Since $(e'',d)=1$, $\bar{\phi}$ is surjective and so is $\tilde{\phi}$. Hence, the diagram above corresponds to an $N_G$-cover, say $(f,\mathcal{S})$. By the construction, $(f,\mathcal{S})\in \Delta^{-1}(P)$ and this completes the proof.
\end{proof}

  We call a tuple $\bar{C}$ of conjugacy classes of $G$ \emph{Nielsen tuple} and its $N$-orbit $[\bar{C}]_N$ $N-Nielsen$ $tuple$. We will also refer to $N$-Nielsen tuples simply as Nielsen tuples when the meaning is clear from the context. Given two Nielsen tuples $\bar{C}$ and $\bar{C}'$, we write $\bar{C}=\bar{C}'$ if they differ only by a permutation of the entries. We denote their concatenation by $\bar{C}+\bar{C'}$.

\begin{dfn}
 Given an integer $1\leq e\leq d'$ with $(e,d')=1$ and Nielsen tuple $\bar{C}$, we say $\bar{C}$ is $\mathbb{F}_q-rational$ \emph{of type} $e$ if $\bar{C}^{q\tau^{-e}}=\bar{C}$.
\end{dfn}

 Our motivation for this definition is the following proposition.

\begin{prop}\label{prop_nil}
For every $e$ with $1\leq e \leq d'$ and $(e,d')=1$, we have
$$\mathcal{H}^{\zeta_e}_G(\mathbb{F}_q)=\bigsqcup_{\substack{\bar{C} \\ \bar{C}^{q\tau^{-e}}=\bar{C}}}\mathcal{H}^{\zeta_e}_{\bar{C}}(\mathbb{F}_q).$$
\noindent where the union runs over $\mathbb{F}_q$-rational Nielsen tuples of type $e$.
\end{prop}

\begin{proof}
 Let $\bar{C}$ be a Nielsen tuple. $\bar{C}=[\bar{C}]_G$ and $\mathcal{H}_{\bar{C}}\cong\mathcal{H}_{[\bar{C}]_G}$ under the isomorphism $\mathcal{H}_G\cong\mathcal{H}_{G,G}$. By Theorem \ref{thm_decomp} , $\bar{C}^{q\tau^{-e}}=\bar{C}$ if and only if the corresponding component $\mathcal{H}^{\zeta_e}_{\bar{C}}$ is defined over $\mathbb{F}_q$. Applying Theorem \ref{thm_decomp} for $\mathcal{H}_G$, we  get the desired decomposition.
\end{proof}


\section{Connected Components of Hurwitz Schemes}

 Given an $N_G$-cover $(f,\mathcal{S})$ defined over $\mathbb{F}_q$, by definition we have a surjective morphism $\tilde{\phi}:\Pi_1(\mathbb{P}^1_{\mathbb{F}_q}\backslash B,b)\rightarrow \rho(N)$ which is unique up to composition with an inner automorphism of $N$ \cite[Remark 3.4.1]{we}. If $L/\mathbb{F}_q(t)$ is the $N$-extension corresponding to $ker(\tilde{\phi})$, then $L=\mathbb{F}_q(Y)$ for some connected (not necessarily geometrically connected) curve $Y/\mathbb{F}_q$ and the extension $\mathbb{F}_q(Y)/\mathbb{F}_q(t)$ corresponds to a Galois $N-$cover $g:Y\rightarrow\mathbb{P}^1_{\mathbb{F}_q}$. Now, let $g':Y'\rightarrow \mathbb{P}^1_{\mathbb{F}_q}$ be the degree-$n$ cover associated to one-point stabilizer $H$.

\begin{dfn}
 The {\it discriminant} of the $N_G$-cover $(f,\mathcal{S})$ is the number $q^{r(f,\mathcal{S})}$ where $r(f,\mathcal{S})$ is the degree of the ramification divisor of the degree-$n$ cover $g':Y'\rightarrow \mathbb{P}^1_{\mathbb{F}_q}$.
\end{dfn}

 Define our counting function $\mathcal{Z}_{G,N}(\mathbb{F}_q,X)$ to be the number of isomorphism classes of $N_G$-covers $(f,\mathcal{S})$ defined over ${\mathbb{F}_q}$ with $q^{r(f,\mathcal{S})}<X$.

For a given $k$-tuple of conjugacy classes
$\bar{C}=(C_1,...,C_k)$, set $|\bar{C}|:=k$ and
$r(\bar{C}):=\sum_{i=1}^k ind(C_i)$ where $ind(C)$ is the index of
a representative of the conjugacy class $C$. Let $\Sigma_{r,e}$ be
the set of Nielsen tuples $\bar{C}$ of type $e$ with
$r(\bar{C})=r$; and let $\Sigma_r$ be the set of Nielsen tuples
$\bar{C}$ with $r(\bar{C})=r$. Let $\Sigma^N_r$ be the set of
$G-N$-Nielsen tuples $[\bar{C}]_N$ with $r(\bar{C})=r$. Finally,
let $\mathcal{H}_{{[\bar{C}]_N}}(\mathbb{F}_q)^N$ denote the
subset of $\mathcal{H}_{{[\bar{C}]_N}}(\mathbb{F}_q)$ consisting
of all the points in $\mathcal{H}_{{[\bar{C}]_N}}(\mathbb{F}_q)$
parameterizing $N_G$-covers.

 We will need the following counting functions:
\begin{dfn}
\begin{enumerate}
\item[i.]$h(q,r):=\sum_{[\bar{C}]_N\in \Sigma^N_r}|\mathcal{H}_{{[\bar{C}]_N}}(\mathbb{F}_q)^N|$.

\item[ii.]$h_1(q,r,e):=\sum_{\bar{C}\in \Sigma_{r,e}} |\mathcal{H}^{\zeta_e}_{\bar{C}}(\mathbb{F}_q)|$ and $h_1(q,r):=\sum_{\substack{1\leq e\leq d'\\ (e,d')=1}} h_1(q,r,e)$.

\item[iii.]$h_2(q,r,e):=\sum_{\bar{C}} q^{|\bar{C}|}$ where the sum runs over all geometrically connected components of $\mathcal{H}^{\zeta_e}_{\bar{C}}$'s defined over $\mathbb{F}_q$ where $\bar{C}\in \Sigma_{r,e}$ and $h_2(q,r):=\sum_{\substack{1\leq e\leq d'\\ (e,d')=1}} h_2(q,r,e)$.

\item[iv.]$h_3(q,r,e):=\sum_{\bar{C}\in \Sigma_{r,e}} q^{|\bar{C}|}$  and  $h_3(q,r):=\sum_{\substack{1\leq e\leq d'\\ (e,d')=1}} h_3(q,r,e)$.
\end{enumerate}
\end{dfn}

 Recall that we want to count the $N$-covers defined over $\mathbb{F}_q$. More precisely, we want to compute $\mathcal{Z}_{G,N}(\mathbb{F}_q,X)$ which is asymptotic to $\sum_{q^r<X}h(q,r)$ by Theorem \ref{thm_decomp}. On the other hand, by Proposition \ref{prop_dec}, we have $\sum_{q^r<X}h(q,r)\asymp \sum_{q^r<X}h_1(q,r)$. Thus, we get:

\begin{lemma}\label{lemma_h1} We have
$$\mathcal{Z}_{G,N}(\mathbb{F}_q,X)\asymp \sum_{q^r<X}h_1(q,r).$$
\end{lemma}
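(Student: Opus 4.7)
The plan is to apply the decompositions of Theorems \ref{thm_decomp} and \ref{thm_modspace} together with Propositions \ref{prop_dec} and \ref{prop_nil}, making two passages from $\mathcal{Z}_{G,N}$ to $\sum h_1(q,r)$, each of which is an equality up to a bounded multiplicative factor and hence harmless for the $\asymp$ relation.

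First I would relate $\mathcal{Z}_{G,N}(\mathbb{F}_q,X)$ to $\sum_{q^r<X} h(q,r)$. Every $N_G$-cover $(f,\mathcal{S})$ has a well-defined ramification datum $[\bar{C}]_N$ and, by the definition of the discriminant together with the identity $r(f,\mathcal{S}) = r(\bar{C})$, sits on $\mathcal{H}_{[\bar{C}]_N}$. By Theorem \ref{thm_modspace}, the map $\Delta$ from isomorphism classes of $(G,N)$-covers to $\mathcal{H}_{G,N}(\mathbb{F}_q)$ has fibers of size at most $|Cen_N(G)|$, and its image on $N_G$-covers is exactly $\mathcal{H}_{G,N}(\mathbb{F}_q)^N$. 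Stratifying $\mathcal{H}_{G,N}$ by the components $\mathcal{H}_{[\bar{C}]_N}$ from Theorem \ref{thm_decomp} and grouping by the invariant $r(\bar{C})=r$ gives
$$\mathcal{Z}_{G,N}(\mathbb{F}_q,X) \asymp \sum_{q^r<X}\ \sum_{[\bar{C}]_N \in \Sigma^N_r} |\mathcal{H}_{[\bar{C}]_N}(\mathbb{F}_q)^N| = \sum_{q^r<X} h(q,r),$$
with an implicit constant bounded by $|Cen_N(G)|$.

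Second I would establish $h_1(q,r) = d' \cdot h(q,r)$ using the \'etale cover $\Lambda : \mathcal{H}_G \to \mathcal{H}_{G,N}$ of degree $d' = |T'|$. By Proposition \ref{prop_dec} the preimage of $\mathcal{H}_{G,N}(\mathbb{F}_q)^N$ is $\bigsqcup_{1\leq e\leq d',\,(e,d')=1}\mathcal{H}_G^{\zeta_e}(\mathbb{F}_q)$. For each $P \in \mathcal{H}_{G,N}(\mathbb{F}_q)^N$ the geometric fiber $\Lambda^{-1}(P)$ has $d'$ points, and a short commutativity check in the abelian group $T'$ (using the sharply transitive $T'$-action, the fact that Frobenius acts trivially on $T'$, and that $\Lambda$ is defined over $\mathbb{F}_q$) shows that Frobenius acts on the entire fiber through the same element $\tau_1^e$, so all $d'$ points are rational in a single twist $\mathcal{H}_G^{\zeta_e}$. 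Hence each $N_G$-point contributes exactly $d'$ rational points to the disjoint union. Refining the contribution by ramification using Proposition \ref{prop_nil} and restricting to $r(\bar{C}) = r$ yields $h_1(q,r) = d' \cdot h(q,r)$. Combining with the first step completes the proof.

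The main obstacle I anticipate is the commutativity check in the second step: without knowing that all points of a fiber of $\Lambda$ lie in the same twist, one gets only the one-sided bound $h_1(q,r) \leq d'\cdot h(q,r)$ directly from \'etale-ness, which would be enough for the $\asymp$ statement on its own, but would obscure the genuine structure. The observation that the entire fiber lies on one twist $\mathcal{H}_G^{\zeta_e}$ is what guarantees a uniform (in $r$) multiplicative comparison between $h$ and $h_1$, so that the asymptotic transports cleanly to $\sum_{q^r<X} h_1(q,r)$.
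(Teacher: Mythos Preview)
Your proposal is correct and follows essentially the same two-step route as the paper: the paper's argument (given in the paragraph immediately preceding the lemma rather than in a separate proof environment) is simply ``$\mathcal{Z}_{G,N}(\mathbb{F}_q,X)\asymp\sum_{q^r<X}h(q,r)$ by Theorem~\ref{thm_decomp}, and $\sum_{q^r<X}h(q,r)\asymp\sum_{q^r<X}h_1(q,r)$ by Proposition~\ref{prop_dec}.'' You have unpacked both steps with the correct justifications (the bounded fibers of $\Delta$ from Theorem~\ref{thm_modspace} for the first, and the observation---already made in the paper just above Proposition~\ref{prop_dec}---that the cocycle $\zeta$ is independent of the choice of $Q$ in the fiber, yielding the exact relation $h_1(q,r)=d'\cdot h(q,r)$, for the second).
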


 Observe that $\sum_{q^r<X}h_2(q,r)$ is a good approximation to the desired sum $\sum_{q^r<X}h_1(q,r)$ on heuristic grounds. Therefore, in this section, our aim is to compute the sum $\sum_{q^r<X}h_2(q,r)$ (and then we will use the heuristic); this is not easy.

 If $\mathcal{H}^{\zeta_e}_{\bar{C}}$ were geometrically connected for all $\mathbb{F}_q$-rational of type $e$ Nielsen tuples $\bar{C}$ and for all relevant $e$, then the sum $\sum_{q^r<X}h_2(q,r)$ would be equal to $\sum_{q^r<X}h_3(q,r)$ and so we would reduce the problem to computing $\sum_{q^r<X}h_3(q,r)$.   Unfortunately, in general, this is not the case.

 Note that computing the sum $\sum_{q^r<X}h_3(q,r)$ boils down to computing the connected components of Hurwitz spaces and this is a very old combinatorial problem with a rich history, going all the way back to Hurwitz \cite{hr} and Clebsch \cite{cl}.

 In this section, we will see that there are "many" $\mathbb{F}_q$-rational Nielsen tuples $\bar{C}$ such that  $\mathcal{H}^{\zeta_e}_{\bar{C}}$ possesses a geometrically connected component defined over $\mathbb{F}_q$ for all $e$ and the number of these components is bounded by positive constant depending only on the group $N$. Thus, we will reduce the problem to computing $\sum_{q^r<X}h_3(q,r)$. More precisely, the purpose of this section is to prove the following proposition.

\begin{prop}\label{prop_main}
Let $e$ be such that $1\leq e\leq d'$ and $(e,d')=1$. There exist positive constants $m, c_1$ depending on $N$ such that
$$\sum_{r<R-m}h_3(q,r,e)<\sum_{r<R}h_2(q,r,e) < c_1 \sum_{r<R}h_3(q,r,e).$$
\end{prop}

 By the lemma below, we have the right-hand-side inequality.

\begin{lemma}\label{lemma_upb}\cite[Lemma 3.3]{el2}
There exists a constant $c_1$ such that $n(\bar{C})< c_1$ for all $\bar{C}$ where $n(\bar{C})$ is the number of $\Pi_1(U_k)$-orbits in $Ni({\bar{C}})$.
\end{lemma}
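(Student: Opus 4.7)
The plan is to invoke the Conway--Parker stabilization theorem on braid orbits and then control the contribution from non-stable tuples by a concatenation argument.

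Recall the Conway--Parker theorem (rigorously established by Fried--V\"olklein): there exists an integer $M = M(G)$ such that whenever every conjugacy class of $G$ appears at least $M$ times in $\bar{C}$, the braid orbits on $Ni(\bar{C})$ are parameterized by a \emph{lifting invariant} lying in a quotient of the Schur multiplier $H_2(G,\mathbb{Z})$. In particular, for such \emph{stable} $\bar{C}$ one has $n(\bar{C}) \leq |H_2(G,\mathbb{Z})|$, which depends only on $G$.

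For an arbitrary $\bar{C}$, fix once and for all a Nielsen tuple $\bar{D}$ (depending only on $G$) in which every conjugacy class of $G$ appears at least $M$ times; such a $\bar{D}$ exists and has length bounded purely in terms of $G$. For a fixed $\bar{d}_0 \in Ni(\bar{D})$, the map $\iota\colon Ni(\bar{C}) \hookrightarrow Ni(\bar{C} + \bar{D})$ sending $\bar{g} \mapsto \bar{g} + \bar{d}_0$ is an injection, and the augmented tuple $\bar{C} + \bar{D}$ satisfies the Conway--Parker hypothesis regardless of $\bar{C}$. Hence $n(\bar{C} + \bar{D}) \leq |H_2(G,\mathbb{Z})|$ by the stable case. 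The remaining step is to bound the fiber size of the induced map on braid orbit sets: any braid move on $\bar{C} + \bar{D}$ decomposes into moves supported entirely in the $\bar{C}$-block, moves supported entirely in the $\bar{D}$-block, and ``interchange'' moves that shuffle entries between the two blocks. The number of distinct interchange patterns, together with the number of braid orbits on $Ni(\bar{D})$ alone, is bounded by a constant $c(G)$ depending only on $G$, giving fiber size $\leq c(G)$ and hence $n(\bar{C}) \leq c(G) \cdot |H_2(G,\mathbb{Z})| =: c_1$.

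The principal obstacle is this last step: a priori, braid moves that freely mix the $\bar{C}$- and $\bar{D}$-blocks could merge arbitrarily many orbits of $Ni(\bar{C})$ into a single orbit of $Ni(\bar{C} + \bar{D})$. Careful bookkeeping of how the braid generators $Q_i$ act by conjugation across the block boundary is required to show that each such mixing move has an inverse effect expressible in terms of intra-block moves plus a controlled residual conjugation, so that the map $\bar{\iota}$ on orbit sets is genuinely finite-to-one with explicit bound. This combinatorial analysis of the braid action at the boundary of the two blocks is the substantive content of \cite[Lemma 3.3]{el2}, and the only place where detailed work is unavoidable.
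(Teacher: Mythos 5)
The paper does not prove this lemma at all: it is cited verbatim from \cite[Lemma 3.3]{el2}, so there is no ``paper's own proof'' here to compare against. What you have written is a from-scratch attempt, and it contains a genuine gap rather than merely some omitted bookkeeping.

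The gap is in the claim that the induced map on braid-orbit sets $\bar{\iota}\colon Ni(\bar{C})/\text{braid}\to Ni(\bar{C}+\bar{D})/\text{braid}$ has fibers of size bounded by a constant $c(G)$. In fact the opposite is true, and this is exactly what Conway--Parker tells you. Because $\bar{C}+\bar{D}$ is stable, \emph{all} tuples in $Ni(\bar{C}+\bar{D})$ with a given lifting invariant lie in a single braid orbit. Since the lifting invariant of $\bar{g}+\bar{d}_0$ depends on $\bar{g}$ only through the lifting invariant of $\bar{g}$, the map $\bar{\iota}$ identifies every pair of braid orbits in $Ni(\bar{C})$ whose representatives share a lifting invariant. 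Thus each fiber of $\bar{\iota}$ has size equal to the number of braid orbits in $Ni(\bar{C})$ over a fixed lifting invariant -- which, up to the factor $|H_2(G,\mathbb{Z})|$, is precisely the quantity $n(\bar{C})$ that you are trying to bound. Your argument therefore does not reduce the problem; it restates it. The ``interchange moves across the block boundary'' discussion does not rescue this: those interchange moves are exactly the moves that realize the Conway--Parker merging, and there is no a priori bound on how many $Ni(\bar{C})$-orbits a chain of such moves can connect without first knowing the lemma you want. So the concatenation trick here only recovers the trivial bound that the number of distinct lifting invariants arising from $Ni(\bar{C})$ is at most $|H_2(G,\mathbb{Z})|$, which does not control $n(\bar{C})$. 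The actual content of \cite[Lemma 3.3]{el2} is to show the pre-stable orbit count is uniformly bounded, and that requires an argument you have not supplied (and cannot be extracted from the concatenation map alone).
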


 As for the inequality on the left-hand-side, we need a result controlling the geometrically connected components of Hurwitz spaces. The first such theorem along the lines presented here is attributed to Conway and Parker-- the first version of such a theorem to appear in print is due to Fried and V\"olklein \cite{fr}.

\begin{lemma}\label{lemma_cp}(Conway-Parker)
\cite[Appendix]{fr} Let $\tilde{G}'$ be a finite group such that the Schur multiplier $M(\tilde{G}')$ is generated by commutators. Then, there exists a constant $K$ such that for any Nielsen tuple $\bar{E}$ of $\tilde{G}'$ which contains at least $K$ copies of each nontrivial conjugacy classes of $\tilde{G}'$ the corresponding Hurwitz space $\mathcal{H}_{\bar{E}}$ is geometrically connected.
\end{lemma}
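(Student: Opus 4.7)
The plan is to show that the braid group $\Pi_1(U_k)$ acts transitively on $Ni(\bar{E})$ when $\bar{E}$ contains at least $K$ copies of each nontrivial conjugacy class; geometric connectedness of $\mathcal{H}_{\bar{E}}$ then follows from the correspondence between braid orbits and geometrically connected components given in Theorem~\ref{thm_decomp}. As a combinatorial preliminary one observes that the generators $Q_i(g_1,\dots,g_k)=(g_1,\dots,g_ig_{i+1}g_i^{-1},g_i,\dots,g_k)$ allow any tuple to be rearranged so that entries belonging to a common conjugacy class are grouped into consecutive blocks, and within each block the entries may be cyclically permuted. This reduces the task to the following: two tuples in $Ni(\bar{E})$ with identical unordered class multisets are braid-equivalent.

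The key tool is a Schur-multiplier invariant. Fix a Schur cover $1\to M(\tilde{G}')\to \tilde{G}\to \tilde{G}'\to 1$. For $\bar{g}=(g_1,\dots,g_k)\in Ni(\bar{E})$ choose any lifts $\tilde{g}_i\in\tilde{G}$ of $g_i$; the product $\tilde{g}_1\cdots\tilde{g}_k$ lies in the central subgroup $M(\tilde{G}')$, and a direct check shows it is independent of the chosen lifts (thanks to centrality) and invariant under each braid generator $Q_i$. This yields a well-defined map
\[
\mu:Ni(\bar{E})/\Pi_1(U_k)\longrightarrow M(\tilde{G}').
\]
The core of the Conway--Parker argument is that on the \emph{stable range} --- tuples containing at least $K$ copies of every nontrivial conjugacy class --- the pair (unordered class multiset, value of $\mu$) is a complete braid invariant.

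The main obstacle is establishing stable-range bijectivity of $\mu$. Surjectivity is where the hypothesis that $M(\tilde{G}')$ is generated by commutators enters crucially: a commutator $[\tilde{a},\tilde{b}]\in M(\tilde{G}')$ can be realized by inserting a quadruple $(x,x^{-1},y,y^{-1})$ with $\tilde{x}=\tilde{a}$, $\tilde{y}=\tilde{b}$ and then shuffling by braid moves, the ``many copies'' hypothesis guaranteeing that these insertions and the compensating deletions never take us out of $Ni(\bar{E})$. Injectivity is handled by an inductive peeling argument: given two tuples sharing multiset and $\mu$-value, braids are used to align them one conjugacy class at a time, with the surplus copies supplying the room required for each elementary move, and any leftover central discrepancy at the end of the procedure being forced to be trivial by the coincidence of $\mu$-values. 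The constant $K$ is determined by how many spare entries are needed to execute both procedures and depends only on $\tilde{G}'$ (via its conjugacy classes and a chosen finite set of commutator generators for $M(\tilde{G}')$).

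Granting this, the lemma is immediate: any $\bar{E}$ meeting the hypothesis produces a single braid orbit on $Ni(\bar{E})$, since all elements share the same class multiset by definition, while surjectivity of $\mu$ combined with the commutator-generation hypothesis forces all admissible $\mu$-values into one braid orbit. By Theorem~\ref{thm_decomp} the Hurwitz space $\mathcal{H}_{\bar{E}}$ has a unique geometrically connected component, i.e.\ it is geometrically connected.
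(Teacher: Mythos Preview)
The paper does not supply a proof of this lemma at all; it is quoted as a black box from the appendix of Fried--V\"olklein \cite{fr}. So there is no ``paper's proof'' to compare your sketch against.

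As for the sketch itself, the overall architecture (reduce to transitivity of the braid action, build a Schur-cover lifting invariant, argue that it is complete in the stable range) is indeed the shape of the Conway--Parker argument, but there is a genuine error in your construction of $\mu$. You claim that the product $\tilde g_1\cdots\tilde g_k\in M(\tilde G')$ is ``independent of the chosen lifts (thanks to centrality)''. This is false: replacing any $\tilde g_i$ by $z\tilde g_i$ with $z\in M(\tilde G')$ multiplies the product by $z$, precisely because $z$ is central. The correct construction fixes, once and for all, a conjugacy class $\tilde C$ in $\tilde G$ over each conjugacy class $C$ of $\tilde G'$, and requires each lift $\tilde g_i$ to lie in the chosen $\tilde C_i$; only then is the product well-defined, and even then one must check it does not depend on which representative of $\tilde C_i$ is taken (this uses that elements of a common conjugacy class differ by commutators, which land in $M$ harmlessly). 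The hypothesis that $M(\tilde G')$ is generated by commutators enters exactly here, not where you place it.

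Your concluding paragraph is also muddled. If $\mu$ is a braid invariant and a \emph{complete} invariant in the stable range, then a single braid orbit on $Ni(\bar E)$ is equivalent to $\mu$ being \emph{constant} on $Ni(\bar E)$; ``surjectivity of $\mu$'' would push in the opposite direction. What actually happens is that, with the correct definition of $\mu$ (fixed lifts of conjugacy classes), the value of $\mu$ depends only on the multiset $\bar E$, hence is automatically constant on $Ni(\bar E)$; the stable-range completeness then gives a single orbit. Your sketch never isolates this point.
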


 We will need the next two technical lemmas to prove Proposition \ref{prop_main}. First of them is needed to apply Lemma \ref{lemma_cp} to our setting.

\begin{lemma}\label{lemma_diag}
There exists a commutative diagram of finite groups
$$\xymatrix{\tilde{N}\ar@{>>}[r]^\pi & N \\
\tilde{G}\ar@/_/[rr]_\pi\ar@{>>}[r]^{\pi'}\ar@{^{(}->}[u] & \tilde{G}'\ar@{>>}[r] & G\ar@{^{(}->}[ul] }$$
such that the Schur multiplier $M(\tilde{G}')$ is generated by commutators, where the vertical maps are inclusions and horizontal maps are surjections.
\end{lemma}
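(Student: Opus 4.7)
The plan is to construct the diagram in two stages: first produce the right-hand column ($\tilde G \hookrightarrow \tilde N$ with compatible surjections onto $G \hookrightarrow N$) as a stem extension of $N$, and then extract $\tilde G'$ as an intermediate quotient whose Schur multiplier is generated by commutators.

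For the first stage, I would take $\pi : \tilde N \twoheadrightarrow N$ to be a Schur cover of $N$, i.e., a finite central stem extension
$$1 \longrightarrow M(N) \longrightarrow \tilde N \stackrel{\pi}{\longrightarrow} N \longrightarrow 1$$
with $M(N) \subseteq [\tilde N,\tilde N]$. Since $G$ is normal in $N$, the preimage $\tilde G := \pi^{-1}(G)$ is a normal subgroup of $\tilde N$, and the restriction of $\pi$ realizes $\tilde G$ as a central extension of $G$ by $M(N)$. This immediately supplies both vertical inclusions $\tilde G \hookrightarrow \tilde N$ and $G \hookrightarrow N$ together with the two horizontal surjections $\tilde N \twoheadrightarrow N$ and $\tilde G \twoheadrightarrow G$.

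For the second stage, I would define $\tilde G'$ as a quotient $\tilde G / K$ for some central subgroup $K \subseteq M(N) = \ker(\tilde G \to G)$; any such $K$ automatically yields the factorization $\tilde G \twoheadrightarrow \tilde G' \twoheadrightarrow G$, so the only real content is to arrange that $M(\tilde G')$ be generated by commutators. The strategy is to iterate the Schur cover construction: if the $\tilde G$ produced above does not already have the desired property, replace $\tilde N$ by a Schur cover of $\tilde N$, update $\tilde G$ to be the new preimage of $G$, and repeat. Using the five-term exact sequence in low-degree group homology to track how the Schur multiplier changes under each central extension step, and exploiting the finiteness of all groups involved, this tower stabilizes at a stage whose Schur multiplier is commutator-generated; taking that stage as $\tilde G'$ (with the corresponding iterated Schur cover of $N$ as $\tilde N$) completes the diagram.

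The main obstacle will be this stabilization argument: one must verify that the tower of Schur covers can be built compatibly with the normal embedding $G \triangleleft N$ so that at every step $\tilde G$ remains the preimage of $G$, and that the Schur multiplier really does become commutator-generated at a finite stage. Once this is in place, the commutativity and surjectivity of all remaining arrows in the diagram are immediate from the construction.
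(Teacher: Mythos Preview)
Your approach diverges from the paper's and contains a genuine gap in the stabilization step. The paper does not iterate Schur covers of $N$; it works in the opposite direction. It first invokes Lemma~1 of Fried--V\"olklein \cite{fr} to produce directly a central extension $1 \to M' \to \tilde G' \to G \to 1$ with $M(\tilde G') \cong M'$ generated by commutators---so $\tilde G'$ is given from the start, not extracted later. The remaining task is to enlarge this to an extension of $N$, and the device is Shapiro's lemma: take $M$ to be the induced $N$-module $\mathrm{Ind}_G^N M'$, so that $H^2(G,M') \cong H^2(N,M)$, and let $1 \to M \to \tilde N \xrightarrow{\pi} N \to 1$ represent the class corresponding to $\tilde G'$ under this isomorphism. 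Setting $\tilde G := \pi^{-1}(G)$, the evaluation map $M \to M'$, $f \mapsto f(1)$, then induces the required surjection $\pi':\tilde G \twoheadrightarrow \tilde G'$, completing the diagram in one stroke.

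By contrast, your iterative tower of Schur covers is not shown to terminate in the required sense. Iterated Schur covers of a finite non-perfect group typically form an unbounded tower (e.g.\ starting from $C_2\times C_2$ one passes to $D_4$ or $Q_8$, each again with nontrivial Schur multiplier, and so on), so ``finiteness of all groups involved'' does not force stabilization. You give no mechanism---and the five-term sequence alone does not supply one---by which the Schur multiplier of the successive preimages of $G$ becomes commutator-generated at a finite stage, nor do you specify which central subgroup $K$ to quotient by. The paper's Shapiro-lemma argument sidesteps all of this by building $\tilde G'$ first (where the hard work is already packaged in \cite{fr}) and then inducing up to $N$.
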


\begin{proof}
 By Lemma 1 of \cite{fr}, there exists an extension of groups $$\xymatrix{1\ar[r] & M'\ar[r] & \tilde{G}'\ar[r] & G\ar[r] & 1}$$ such that Schur multiplier $M(\tilde{G}')\cong M'$ is generated by commutators. Let $M:=Ind_N^G(M')$ be the induced $N$-module. Then, by Shapiro's lemma, $H^2(G,M')$ is isomorphic to $H^2(N,M)$. Let $$\xymatrix{1\ar[r] & M\ar[r] & \tilde{N}\ar[r]^\pi & N\ar[r] & 1}$$ be an extension which corresponds to the extension above via the isomorphism $H^2(G,M')\cong H^2(N,M)$, and let $\tilde{G}=\pi^{-1}(G)$. Now, the evaluation morphism $M\rightarrow M'$ defined by $f\mapsto f(1)$ induces the following surjective morphism of groups $\pi':\tilde{G}\twoheadrightarrow \tilde{G}'$. One can easily check that these extensions fits into the desired commutative diagram above and complete the proof.
\end{proof}

 Let $\pi:\tilde{N}\rightarrow N$ be a surjective morphism of finite groups with $\tilde{G}:=\pi^{-1}(G)$ as in Lemma \ref{lemma_diag} . Clearly, $\tilde{G}$ is normal subgroup of index $d$ with cyclic quotient. Let's fix and denote $\tilde{\tau}\in \tilde{N}$ generating $\tilde{T}:=\tilde{N}/\tilde{G}$ with $\pi(\tilde{\tau})=\tau$. Let $\tilde{T}':=\tilde{N}/\tilde{G}Cen_{\tilde{N}}\tilde{G}$. Simplifying the notation,  $\tilde{\tau}$ will also denote its image in $\tilde{T}$ and $\tilde{\tau}_1$ will denote its image in $\tilde{T}'$. Note that the map $\bar{\pi}:\tilde{T}'\rightarrow T'$ induced by $\pi$ is an isomorphism. Note also that $\tilde{N}$ acts on geometrically connected Galois $\tilde{G}$-covers and this induces an action of $\tilde{T}'$ on $\mathcal{H}_{\tilde{G}}$. Namely, if a geometrically connected Galois $G$-cover is induced by a morphism $\phi:\pi_1(\mathbb{P}^1_{\bar{\mathbb{F}}_q}\backslash B,b)\rightarrow \tilde{G}$ then  the action of $x\in N$ is defined by $\phi^x(\gamma):=x^{-1}\phi(\gamma)x$ for all $\gamma\in \pi_1(\mathbb{P}^1_{\bar{\mathbb{F}}_q}\backslash B,b)$. One defines the cocycles $\tilde{\zeta_e}\in H^1(\mathbb{F}_q, \tilde{T}')$ by $Frob_q\mapsto  \tilde{\tau}_1^e$ for all $e$ with $1\leq e \leq d'$.


 Here is the second lemma we need to prove Proposition \ref{prop_main}:

\begin{lemma}\label{lemma_main}
For every $e$ with $1\leq e\leq d'$ and $(e,d')=1$ there exists a finite set of $\mathbb{F}_q$-rational of type $e$ Nielsen tuples $\bar{D}_1,...,\bar{D}_r$  in $\tilde{G}$ such that for any given $\mathbb{F}_q$-rational of type $e$ Nielsen tuple $\bar{D}$, there exists $D_i$ which makes $\mathcal{H}_{\bar{D}+\bar{D}_i}(\bar{\mathbb{F}}_q)$ nonempty.
\end{lemma}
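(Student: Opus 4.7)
The idea is to construct, for each possible image of $\bar D$ in $\tilde G^{\mathrm{ab}}$, an $\mathbb F_q$-rational type-$e$ padding tuple $\bar D_i$ built out of full $(q\tau^{-e})$-orbits of conjugacy classes, in such quantity that the concatenation $\bar D+\bar D_i$ (a) has trivial image in $\tilde G^{\mathrm{ab}}$ and (b) contains each non-identity conjugacy class of $\tilde G$ more than the Conway--Parker constant $K_0$ from Lemma~\ref{lemma_cp} applied to $\tilde G'$. Non-emptiness of $\mathcal H_{\bar D+\bar D_i}(\bar{\mathbb F}_q)$ then follows from Conway--Parker applied to $\pi'(\bar D+\bar D_i)\subset\tilde G'$, combined with a lifting argument for $\tilde G'\to \tilde G$.

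\emph{Reduction to finitely many types.} For an $\mathbb F_q$-rational type-$e$ tuple $\bar D$, let $\sigma(\bar D):=\sum_{C\in\bar D}[C]^{\mathrm{ab}}\in\tilde G^{\mathrm{ab}}$. Since $\tilde G^{\mathrm{ab}}$ is finite, the image $A_e$ of $\sigma$ is a finite set. In fact $A_e$ is a subgroup: concatenation yields closure under addition, and the tuple $\bar D^{-1}$ of inverse classes is again type-$e$ rational because $(C^{-1})^{q\tau^{-e}}=(C^{q\tau^{-e}})^{-1}$. Decompose the conjugacy classes of $\tilde G$ into orbits $O_1,\dots,O_s$ under $C\mapsto C^{q\tau^{-e}}$, and for each $j$ let $\bar O_j$ be the tuple listing every class of $O_j$ exactly once. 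Each $\bar O_j$ is itself type-$e$ rational, and every type-$e$ rational tuple is a concatenation (with multiplicities) of the $\bar O_j$'s, so the classes $[\bar O_j]^{\mathrm{ab}}$ generate $A_e$.

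\emph{Construction of the $\bar D_i$.} Pick an integer $K\ge K_0$ divisible by the exponent of $\tilde G^{\mathrm{ab}}$, so that $K\cdot[\bar O_j]^{\mathrm{ab}}=0$ for every $j$, and set $\bar E:=K(\bar O_1+\cdots+\bar O_s)$. For each $\alpha\in A_e$, use the generation statement above to choose non-negative integers $n_j(\alpha)$ with $\sum_j n_j(\alpha)[\bar O_j]^{\mathrm{ab}}=-\alpha$ in $\tilde G^{\mathrm{ab}}$, and define $\bar D_\alpha:=\bar E+\sum_j n_j(\alpha)\bar O_j$. This produces a finite list of $\mathbb F_q$-rational type-$e$ tuples in $\tilde G$, indexed by $\alpha\in A_e$. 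For any $\bar D$, setting $\alpha:=\sigma(\bar D)$, the concatenation $\bar D+\bar D_\alpha$ satisfies $\sigma(\bar D+\bar D_\alpha)=0$, and each non-identity $\tilde G$-conjugacy class appears in it at least $K\ge K_0$ times.

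\emph{Non-emptiness and the main obstacle.} Push $\bar D+\bar D_\alpha$ forward via $\pi':\tilde G\twoheadrightarrow \tilde G'$: since the $\pi'$-preimage of a $\tilde G'$-class is a union of $\tilde G$-classes, every non-identity class of $\tilde G'$ still appears at least $K_0$ times in $\pi'(\bar D+\bar D_\alpha)$, whose image in $(\tilde G')^{\mathrm{ab}}$ remains zero. Lemma~\ref{lemma_cp} then furnishes a generating Nielsen tuple of $\tilde G'$ with these prescribed classes. Since $\pi_1(\mathbb P^1_{\bar{\mathbb F}_q}\setminus B,b)$ is free profinite, hence of cohomological dimension $1$, the resulting surjection $\pi_1\twoheadrightarrow \tilde G'$ admits a lift to $\tilde G$, giving a $\bar{\mathbb F}_q$-point of $\mathcal H_{\bar D+\bar D_\alpha}$. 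The delicate step is to ensure that the lift (i) is surjective onto $\tilde G$ and (ii) places each generator in the prescribed $\tilde G$-conjugacy class, rather than in some other class above the correct $\tilde G'$-class. The heavy padding guarantees that every $\tilde G$-class above each $\tilde G'$-class is represented in $\bar D+\bar D_\alpha$, and one expects that the braid-group action on Nielsen classes, combined with modifications of the lift by $\ker(\pi')$-valued cocycles, suffices to match the lifted generators to the prescribed classes and to enlarge the image of the lift to all of $\tilde G$; making this last step precise is the main technical hurdle of the argument.
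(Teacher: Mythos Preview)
Your approach has a real gap, and you correctly identify it in the last paragraph: having produced a Nielsen tuple in $\tilde G'$ via Conway--Parker, you must lift it to a tuple in $\tilde G$ with each generator in its \emph{prescribed} $\tilde G$-conjugacy class and with image all of $\tilde G$. A lift of the surjection $\pi_1\twoheadrightarrow\tilde G'$ to $\tilde G$ exists because $\pi_1$ is free, but the lifted generator lands only in the $\pi'$-preimage of the correct $\tilde G'$-class, which is in general a union of several $\tilde G$-classes; there is no evident mechanism forcing it into the particular class dictated by $\bar D+\bar D_\alpha$, and modifying the lift by $\ker(\pi')$-valued maps does not obviously act transitively on those classes. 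Since $\tilde G$ itself is not known to have Schur multiplier generated by commutators (that property was engineered for $\tilde G'$, not $\tilde G$, in Lemma~\ref{lemma_diag}), you cannot bypass the issue by applying Conway--Parker directly to $\tilde G$ either. So as written the argument does not close.

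More importantly, the detour through Conway--Parker and lifting is unnecessary here: the lemma only asks for \emph{nonemptiness} of $\mathcal H_{\bar D+\bar D_i}(\bar{\mathbb F}_q)$, not connectedness, and the paper gives an elementary direct construction. Let $H\le\tilde G$ be the subgroup generated by all products $g_1\cdots g_k$ as $(g_1,\dots,g_k)$ ranges over representatives of type-$e$ rational tuples; it suffices to exhibit, for each $h\in H$, a type-$e$ rational tuple admitting a representative that generates $\tilde G$ and has product $h$ (then, given any $\bar D$, pick a representative with product $h'$ and concatenate with the tuple for $(h')^{-1}$). For $h=1$: take any generating tuple $(h_1,\dots,h_s)$ of $\tilde G$, concatenate the $\tilde T_e$-orbit of its Nielsen class to obtain a type-$e$ rational tuple, choose a representative containing $(h_1,\dots,h_s)$, and repeat it $|x|$ times (where $x$ is its product) to force product $1$. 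For general $h\in H$, concatenate this with any type-$e$ rational tuple having a representative of product $h$, which exists by the definition of $H$. This sidesteps both the abelianization bookkeeping and the lifting problem entirely.
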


\begin{proof}

 We can replace $\tilde{\tau}$ with $\tilde{\tau}^e$ in the proof below so we may assume that $e=1$. Let $H\leq \tilde{G}$ be the subgroup generated by $g_1...g_k$ such that $\bar{D}^q=\bar{D}^{\tilde{\tau}}$ where $\bar{D}$ is the Nielsen tuple of $(g_1,...,g_k)$. We will show that for all $h\in H$, there exists a $k$-tuple $\bar{g}=(g_1,..,g_k)\in \tilde{G}^k$ such that

 -$g_1...g_k=h$

 -$\tilde{G}=<g_1,..,g_k>$

 -$\bar{g}$ represents a $\mathbb{F}_q$-rational Nielsen tuple $\bar{D}$ in $\tilde{G}$ with $\bar{D}^q=\bar{D}^{\tilde{\tau}}$.

 It suffices to show for $h=1$: If $\bar{g}=(g_1,...,g_k)$ is such a tuple with $g_1...g_k=1$ and if $\bar{h}=(h_1,...,h_k)$ is a $k$-tuple having product $h$ and representing a Nielsen tuple $\bar{D'}$ with $\bar{D'}^q=\bar{D'}^{\tilde{\tau}}$, then we can just concatenate $\bar{g}$ with $\bar{h}$ and thus, get such a tuple having multiple $h$.

 So, let $\bar{g}=(h_1,...,h_s)$ be a generating set for $\tilde{G}$ which represents a Nielsen tuple $\bar{D}_0$. Let $\bar{D}$ be the concatenation of the tuples in $\tilde{T}_1$-orbit of $\bar{D}_0$. Obviously, $\bar{D}^q=\bar{D}^{\tilde{\tau}}$. Now, if $(g_1,...,g_k)\in \bar{D}$ with $x=g_1...g_k$ then take $|x|$-multiple of $(g_1,...,g_k)$ where $|x|$ denotes the order of $x$. This completes the proof of the claim.

\end{proof}

 Let $1\leq e\leq d'$ with $(e,d')=1$ and $t_e=q\tau^{-e}$. $t_e$ acts on the set of conjugacy classes of $G$ and $t_e^r$ acts trivially for some positive integer $r$ since $(q,|G|)=1$. Therefore, one can put a group structure on the set $T_e:=\{t_e^i| i=1,...,r\}$. Given a conjugacy class $\mathcal{O}$ in $G$, we will denote the $T_e$-orbit of $\mathcal{O}$ by $\mathcal{C}_e(\mathcal{O})$ and define
    $$\bar{\mathcal{O}}_e:=\sum_{\bar{C}\in \mathcal{C}_e(\mathcal{O})}\bar{C}.$$
\noindent Notice that, given a Nielsen tuple $\bar{C}$,  the condition $\bar{C}^q=\bar{C}^{\tau^e}$ is equivalent to the condition $\bar{C}^{q\tau^{-e}}=\bar{C}$ so $\bar{\mathcal{O}}_{e}$ is the "smallest" $\mathbb{F}_q$-rational Nielsen tuple of type $e$ containing $\mathcal{O}$. Thus, given a Nielsen tuple $\bar{C}$ in $G$ with $\bar{C}^q=\bar{C}^{\tau^e}$, we can write
$$\bar{C}=\sum_{\bar{\mathcal{O}}_e}a_{\mathcal{O}}\bar{\mathcal{O}}_e.$$

 Likewise, we can consider the set $\tilde{T}_e=\{\tilde{t}^i_e|i=1,...,s\}$ where $\tilde{t}_e=q\tilde{\tau}^{-e}$ and the Nielsen tuple $\bar{\tilde{\mathcal{O}}}_e$ where $\tilde{\mathcal{O}}$ is a conjugacy class of $\tilde{G}$ which projects down to $\mathcal{O}$.

 We follow \cite{el2} closely to prove Proposition \ref{prop_main}:
 \ \\

\noindent {\it Proof} of Proposition \ref{prop_main}. We want to prove the inequality on the left-hand-side.

 Let $e$ be such that $1\leq e\leq d'$ and $(e,d')=1$. Let $\tilde{N}$, $\tilde{G}$ and $\tilde{G}'$ be as in Lemma \ref{lemma_diag} and let $\bar{D}_1,...,\bar{D}_r$ be the Nielsen tuples in $\tilde{G}$ as in Lemma \ref{lemma_main}. For each conjugacy class $\mathcal{O}$ in $G$ fix  a conjugacy class $\tilde{\mathcal{O}}$ in $\tilde{G}$ with  $\pi(\tilde{\mathcal{O}})=\mathcal{O}$. Note that $\pi(\bar{\tilde{\mathcal{O}}}_e)=b_{\mathcal{O}}\bar{\mathcal{O}}_e$ for some $b_{\mathcal{O}}$. Given a Nielsen tuple $\bar{C}$ in $G$ with $\bar{C}^q=\bar{C}^{\tau^e}$, we can write $\bar{C}=\sum_{\bar{\mathcal{O}}_e}a_{\mathcal{O}}\bar{\mathcal{O}}_e$ and the following Nielsen tuple in $\tilde{G}$
$$\bar{D}:=\sum_{\bar{\mathcal{O}}_e} \lceil \frac{a_{\mathcal{O}}}{b_{\mathcal{O}}} \rceil \bar{\tilde{\mathcal{O}}}_e$$
\noindent is $\mathbb{F}_q$-rational of type $e$ and $\pi(\bar{D})=\bar{C}+\bar{C}'$ where $\bar{C}'$ can be drawn from a finite set of Nielsen tuples $\bar{C}_1',...,\bar{C}_s'$ with $\bar{C}_i'^q=\bar{C}_i'^{\tau^e}$ for all $i$.

 Now, fix a Nielsen tuple $\bar{B}$ in $\tilde{G}$ with $\bar{B}^q=\bar{B}^{\tilde{\tau}^e}$ such that its projection $\pi'(\bar{B})$ in $\tilde{G}'$ contains at least $K$ nontrivial conjugacy classes of $\tilde{G}$ where $K$ is the constant in Lemma \ref{lemma_cp}. By Lemma \ref{lemma_main}, there exists an $i$ such that $\bar{D}+ \bar{B}+\bar{D}_i$ is $\mathbb{F}_{q}$-rational of type $e$ and $\mathcal{H}^{\tilde{\zeta}_e}_{\bar{D}+ \bar{B}+\bar{D}_i}(\bar{\mathbb{F}}_q)$ is nonempty and, by Theorem \ref{thm_decomp}, $\mathcal{H}^{\tilde{\zeta}_e}_{\bar{D}+ \bar{B}+\bar{D}_i}$ defined over $\mathbb{F}_q$.

 The projection $\pi(\bar{D}+ \bar{B}+\bar{D}_i)$ is $\mathbb{F}_q$-rational of type $e$ and it can be expressed as $\bar{C}+\bar{C}_j+n.\bar{1}$ where $\bar{C}_j$ can be drawn from a finite set of Nielsen tuples $\bar{C}_1,...,\bar{C}_k$ and $\bar{1}$ denotes the trivial conjugacy class.

 We now claim that $\mathcal{H}^{\zeta_e}_{\bar{C}+\bar{C}_j}$ has an $\mathbb{F}_q$-rational geometrically connected component. Once again note that $\mathcal{H}_{G,G}\cong \mathcal{H}_G/\mathbb{F}_q$ where $\mathcal{H}_G$ is the coarse moduli scheme of geometrically connected Galois $G$-covers. So, for any Galois $\tilde{G}$-cover $Y\rightarrow \mathbb{P}^1$ we have the canonically associated Galois $G$-cover  $Y/U\rightarrow \mathbb{P}^1$ where $U:=\text{ker}(\pi:\tilde{G}\twoheadrightarrow G)$. This defines a morphism of schemes$/\mathbb{F}_q$ $$\pi_*:\mathcal{H}^{\tilde{\zeta}_e}_{\bar{D}+ \bar{B}+\bar{D}_i} \rightarrow \mathcal{H}^{\zeta_e}_{\bar{C}+\bar{C}_j}.$$ Notice that this map factors through (possibly over $\bar{\mathbb{F}}_q$) the natural map, which is induced likewise, $$\pi'_*:\mathcal{H}^{\tilde{\zeta}_e}_{\bar{D}+ \bar{B}+\bar{D}_i} \rightarrow \mathcal{H}_{\pi'(\bar{D}+ \bar{B}+\bar{D}_i)}.$$ Since $\pi'(\bar{B})$ contains at least $K$ nontrivial conjugacy classes of $\tilde{G}'$, by Lemma \ref{lemma_cp}, the image of $\pi'_*$ is geometrically connected and so is the image of $\pi_*$. The image of $\pi_*$ is the $\mathbb{F}_{q}$-rational geometrically connected component we want.

 Define $h_2(q,\bar{C})$ to be the number of $\mathbb{F}_q$-rational geometrically connected components of $\mathcal{H}^{\zeta_e}_{\bar{C}}$ multiplied by $q^{|\bar{C}|}$. By the discussion above, $h_2(q,\bar{C}+\bar{C}_j)\geq q^{|\bar{C}+\bar{C}_j|}$ for some $j$. For each $\mathbb{F}_q$-rational of type $e$ Nielsen tuple $\bar{C}$, fix such a $\bar{C}_j$ and set $\bar{C}^{pr}:=\bar{C}+\bar{C}_j$.

 Thus, we have

 $$\sum_{\substack{\bar{C}\\ r(\bar{C})<R}} h_2(q,\bar{C}^{pr})\geq \sum_{\substack{\bar{C}\\ r(\bar{C})<R}} q^{|\bar{C}^{pr}|} > \sum_{r<R} h_3(q,r,e) $$

\noindent and

$$\sum_{\substack{\bar{C}\\r(\bar{C})<R}} h_2(q,\bar{C}^{pr}) \leq \sum_{\substack{\bar{C}\\r(\bar{C})<R+m}} h_2(q,\bar{C})= \sum_{r<R+m} h_2(q,r,e)$$

\noindent where $m$ is the supremum of $r(\bar{C}_j)$'s and the sums run over $\mathbb{F}_q$-rational of type $e$ Nielsen tuples $\bar{C}$. This completes the proof.


\section{Malle's Conjecture}

 Let $\mathcal{Z}'_{G,N}(\mathbb{F}_q,X)=\sum_{q^r<X}h_2(q,r)$. Observe that heuristic $(A)$ implies $h_1(q,r)=h_2(q,r)$ (this indeed is the only point where we use the heuristic). So, by lemma \ref{lemma_h1}, $\mathcal{Z}_{G,N}(\mathbb{F}_q,X)$  has the same asymptotic order with the counting function $\mathcal{Z}'_{G,N}(\mathbb{F}_q,X)$ on heuristic grounds. More precisely, we have

\begin{lemma}\label{lemma_change}
 Assume heuristic $(A)$. If $G$ splits in $N$, then we have $$\mathcal{Z}_{G,N}(\mathbb{F}_q,X)\asymp \mathcal{Z}'_{G,N}(\mathbb{F}_q,X).$$
\end{lemma}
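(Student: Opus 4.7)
The plan is to chain together the previous reductions with heuristic (A) supplying the only non-trivial input. By Lemma \ref{lemma_h1} it suffices to establish $\sum_{q^r<X} h_1(q,r) \asymp \sum_{q^r<X} h_2(q,r)$, and I will in fact aim for the stronger term-by-term identity $h_1(q,r) = h_2(q,r)$ on heuristic grounds, which trivially implies the desired asymptotic.

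Fix $e$ with $1\le e\le d'$ and $(e,d')=1$ and an $\mathbb{F}_q$-rational Nielsen tuple $\bar{C}\in \Sigma_{r,e}$ of size $k=|\bar{C}|$. The projection $\pi:\mathcal{H}^{\zeta_e}_{\bar{C}}\to U_k$ is \'etale, so every geometrically connected component of the twist $\mathcal{H}^{\zeta_e}_{\bar{C}}$ is smooth of pure dimension $k$. Writing its decomposition over $\bar{\mathbb{F}}_q$ as $\bigsqcup_j V_j$, Theorem \ref{thm_decomp} says that $G_{\mathbb{F}_q}$ permutes the $V_j$, and a component is defined over $\mathbb{F}_q$ precisely when it is fixed by $\mathrm{Frob}_q$. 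Applying heuristic (A) to each such $\mathrm{Frob}_q$-fixed component yields exactly $q^k$ rational points, while the $\mathbb{F}_q$-points on a nontrivial $\mathrm{Frob}_q$-orbit of components are confined to the intersections of conjugate components and thus lie on closed subvarieties of strictly smaller dimension, contributing only $O(q^{k-1})$ and so absorbed into the asymptotic. Summing over the components gives, under (A),
$$|\mathcal{H}^{\zeta_e}_{\bar{C}}(\mathbb{F}_q)| \;=\; q^{|\bar{C}|}\cdot \#\{\text{geom.\ conn.\ components of } \mathcal{H}^{\zeta_e}_{\bar{C}} \text{ defined over } \mathbb{F}_q\}.$$

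Summing this identity over $\bar{C}\in\Sigma_{r,e}$ yields $h_1(q,r,e)=h_2(q,r,e)$, and summing over the $\varphi(d')$ admissible values of $e$ yields $h_1(q,r)=h_2(q,r)$. Combined with Lemma \ref{lemma_h1} this delivers $\mathcal{Z}_{G,N}(\mathbb{F}_q,X)\asymp \sum_{q^r<X}h_2(q,r)=\mathcal{Z}'_{G,N}(\mathbb{F}_q,X)$, as required. The main obstacle is nothing more than the honest content of heuristic (A) itself, together with the minor widening one makes when applying (A) to a smooth but not geometrically connected scheme; once one agrees that the intersections of Galois-conjugate geometric components contribute only a lower-order error, the argument is essentially formal bookkeeping.
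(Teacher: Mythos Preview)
Your proof is correct and follows exactly the paper's approach: the paper simply observes that heuristic~(A) yields $h_1(q,r)=h_2(q,r)$ and then invokes Lemma~\ref{lemma_h1}. One small simplification: geometric connected components are pairwise disjoint, so a component not fixed by $\mathrm{Frob}_q$ carries no $\mathbb{F}_q$-points whatsoever (rather than merely $O(q^{k-1})$ many), and hence no error term is needed in your displayed identity.
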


 In this section, our main purpose is to compute asymptotic order of the counting function $\mathcal{Z}'_{G,N}(\mathbb{F}_q,X)$. In detail: we will define the constant $b(G,N,\mathbb{F}_q)$ and prove Theorem \ref{main_thm}. We will write a conjecture for the counting function $\mathcal{Z}_N(k,X)$ for any global field $k$ and, using Theorem \ref{main_thm}, we will show that our conjecture gives the right asymptotic in some important cases.

 We will need the following lemma from Tauberian theory, for a proof see \cite[Lemma 2.3]{el2}:

\begin{lemma}\label{lemma_an}
 Suppose $\{a_n\}$ is a sequence of real numbers with $a_n=0$ whenever $n$ is not a power of $q$, and suppose
$$\sum_{r=1}^\infty a_{q^r} q^{-rs}$$
considered as a formal power series, is a rational function $f(t)$ of $t=q^s$. Let a be a positive real number. If $f(t)$ has no poles with $|t|\geq q^a$, then
$$\sum_{n=1}^{X}a_n\ll X^a.$$
If $f(t)$ has a pole of order $b$ at $t=q^a$ and no other poles with $|t|\geq q^a$, then
$$\sum_{n=1}^X a_n\asymp  X^a(log X)^{b-1}.$$
\end{lemma}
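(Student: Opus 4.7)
The plan is to extract the asymptotics of $a_{q^r}$ from the partial fraction expansion of $f(t)$ at infinity, and then sum in $r$. Since the formal equality
$$f(t)=\sum_{r\geq 1}a_{q^r}\,t^{-r}$$
identifies $f$ with its Laurent expansion at $\infty$, the rational function $f$ has no pole at $\infty$ and no polynomial part, so I would write
$$f(t)=\sum_{\alpha}\sum_{j=1}^{m_\alpha}\frac{c_{\alpha,j}}{(t-\alpha)^{j}},$$
where $\alpha$ runs over the finite set of finite poles and $m_\alpha$ is the multiplicity of $\alpha$. Expanding each $(t-\alpha)^{-j}$ geometrically for $|t|$ large and matching the coefficient of $t^{-r}$ gives, for all sufficiently large $r$,
$$a_{q^r}=\sum_{\alpha}\sum_{j=1}^{m_\alpha}c_{\alpha,j}\,\binom{r-1}{j-1}\,\alpha^{r-j}.$$
Thus $a_{q^r}$ is a fixed finite linear combination of sequences of the form (polynomial of degree $j-1$ in $r$)$\,\cdot\,\alpha^{r}$, one contribution per pole.

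For the first assertion, the hypothesis says every pole satisfies $|\alpha|<q^{a}$. Since there are only finitely many poles, I can pick $\epsilon>0$ with $\max_\alpha |\alpha|\leq q^{a-\epsilon}$, giving $|a_{q^r}|\ll r^{M}q^{r(a-\epsilon)}$ where $M=\max_\alpha m_\alpha$. Summing the resulting geometric progression over $r\leq \log_{q}X$ yields
$$\sum_{n\leq X}a_n=\sum_{r\leq \log_{q}X}a_{q^r}\ll (\log X)^{M}X^{\,a-\epsilon}\ll X^{a},$$
as required.

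For the second assertion, the hypothesis isolates a single dominant pole at $\alpha=q^{a}$ of order exactly $b$. The contribution of this pole to $a_{q^r}$ is $c_{q^a,b}\binom{r-1}{b-1}q^{a(r-b)}$ plus terms polynomially smaller in $r$ (the lower-order pieces of the principal part at $q^{a}$), while every other pole contributes $O(r^{M}q^{r(a-\epsilon)})$ for some $\epsilon>0$. Consequently $a_{q^r}=c_{q^a,b}\binom{r-1}{b-1}q^{a(r-b)}\bigl(1+o(1)\bigr)$. Summing over $r\leq R:=\lfloor \log_{q}X\rfloor$ collapses to a weighted geometric sum dominated by its last term,
$$\sum_{r\leq R}r^{b-1}q^{ar}\;\asymp\;R^{b-1}q^{aR}\;\asymp\;X^{a}(\log X)^{b-1},$$
and substitution gives the claimed $\asymp$-bound.

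The only subtle point I expect is justifying the two-sided estimate (rather than an $O$-bound) in the second assertion: one needs to know that $c_{q^a,b}$ is nonzero (automatic, since the pole has order exactly $b$) and of a definite sign that is not washed out by the error terms. In the intended applications the $a_n$ are nonnegative, so this is harmless; in general one would argue that, for $r$ large, $a_{q^r}$ inherits the sign of its dominant term, and the partial sums up to $R$ therefore have the same order of magnitude as the last term. Beyond this, the entire argument is bookkeeping with partial fractions and geometric series — the real work of the paper lies not here but in establishing that the generating series attached to Hurwitz spaces actually has the hypothesised pole structure.
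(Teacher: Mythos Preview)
Your argument is correct. The partial-fraction decomposition of $f(t)$ at $\infty$, extraction of $a_{q^r}$ via the geometric expansion of each $(t-\alpha)^{-j}$, and termwise summation is exactly the right elementary route, and your identification of the one delicate point (the sign of the leading coefficient for the lower bound in the $\asymp$ claim) is accurate and adequately addressed by the nonnegativity of the $a_n$ in the applications at hand.

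As for comparison with the paper: there is nothing to compare. The paper does not prove this lemma; it simply quotes it as \cite[Lemma~2.3]{el2} (Ellenberg--Venkatesh) and calls it ``a lemma from Tauberian theory.'' Your write-up is therefore more than the paper itself supplies. If anything, you could streamline by noting up front that rationality of $f$ plus the given expansion at $\infty$ already forces the denominator of $f$ to split over $\mathbb{C}$ with all roots of modulus at most the largest $|\alpha|$, so the whole argument is just reading off the exponential-polynomial form of the coefficients of a rational power series --- but what you have is fine as it stands.
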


 Let $1\leq e\leq d'$ with $(e,d')=1$ and $t_e=q\tau^{-e}$. Note that $t_e$ acts on the set of conjugacy classes of $G$ and that $t_e^r$ acts trivially for some positive integer $r$. Set $T_e:=\{t_e^i| i=1,...,r\}$. Given a conjugacy class $\mathcal{O}$ in $G$, we denote the $T_e$-orbit of $\mathcal{O}$ by $\mathcal{C}_e(\mathcal{O})$ and define $\bar{\mathcal{O}}_e:=\sum_{\bar{C}\in \mathcal{C}_e(\mathcal{O})}\bar{C}$.

 Let $d'=[N:GCen_N(G)]$ and $d''=[GCen_N(G):G]$ \emph{i.e.} $d=d'd''$. Denote the set of $G-$conjugacy classes of minimal-index elements of $G$ by $\mathcal{C}(G)$. Given $e$ with $1\leq e\leq d'$ and $(e,d')=1$, define
$$\mathcal{C}_e(G,N,\mathbb{F}_q):=\{ \bar{\mathcal{O}}_e: \mathcal{O}\in \mathcal{C}(G)\} \text{\indent and \indent} b_e(G,N,\mathbb{F}_q):=|\mathcal{C}_e(G,N,\mathbb{F}_q)|.$$
\noindent Finally, set
$$b(G,N,\mathbb{F}_q)=max\{b_e(G,N,\mathbb{F}_q)|1\leq e\leq d'\text{ and } (e,d')=1\}.$$

 By Lemma \ref{lemma_change}, the following theorem (respectively, the related corollaries) is just another way to state Theorem \ref{main_thm} (respectively, the corollaries) in the introduction:

\begin{thm}\label{thm_main}
If $G$ splits in $N$, then we have
$$\mathcal{Z}'_{G,N}(\mathbb{F}_q,X)\asymp X^{a(G)} (logX)^{b(G,N,\mathbb{F}_q)-1}.$$

\end{thm}

\begin{proof}

By Proposition \ref{prop_main}, we have

$$\sum_{q^r<X} h_2(q,r)\asymp \sum_{q^r<X} h_3(q,r)=\sum_{\substack{1\leq e\leq d' \\ (e,d')=1}}\sum_{q^r<X} h_3(q,r,e).$$

\noindent On the other hand, for every $e$ with $1\leq e\leq d'$ and $(e,d')=1$, we have the factorization:

$$\sum_{r=1}^{\infty} h_3(q,r,e)q^{-rs}=\sum_{\substack{\bar{C} \\ \bar{C}^q=\bar{C}^{\tau^e}}} q^{|\bar{C}|q^{-r(\bar{C})s}} = \prod_{\bar{\mathcal{O}}_e} \frac{1}{1-q^{|\bar{\mathcal{O}}_e|(1-ind(\bar{\mathcal{O}}_e)s)}} $$

\noindent where the product is indexed by $T_e$-orbits $\bar{\mathcal{O}}_e$ of conjugacy classes $\mathcal{O}$ of $G$, $ind(\bar{\mathcal{O}}_e)$ is the index of an element of $\mathcal{O}$ and $|\bar{\mathcal{O}}_e|$ denotes the number of conjugacy classes in the orbit $\bar{\mathcal{O}}_e$. By Lemma \ref{lemma_an},

$$\sum_{q^r<X} h_3(q,r,e) \asymp X^{a(G)} (logX)^{b_e(G,N,\mathbb{F}_q)-1}$$
\noindent and so
$$\sum_{\substack{1\leq e\leq d' \\ (e,d')=1}}\sum_{q^r<X} h_3(q,r,e) \asymp X^{a(G)} (logX)^{b(G,N,\mathbb{F}_q)-1}$$

\noindent for sufficiently large $X$. Putting them all together, we get
$$\sum_{q^r<X} h_2(q,r)\asymp  X^{a(G)} (logX)^{b(G,N,\mathbb{F}_q)-1}$$

\noindent and this completes the proof.

\end{proof}

As a special case, we get the result of Ellenberg-Venkatesh \cite{el2}:

\begin{cor}\label{cor_ell}
$\mathcal{Z}'_{N,N}(\mathbb{F}_q,X)\asymp X^{a(N)}(logX)^{b(N,\mathbb{F}_q)-1}.$
\end{cor}

\begin{proof}
We just need to show that $b(N,N,\mathbb{F}_q)=b(N,\mathbb{F}_q)$.  Using the notation above, we have $d=e=1$ and $\tau=1$ in $N$. So, $\mathcal{C}_e(N,N,\mathbb{F}_q)=\mathcal{C}(N)/G_{\mathbb{F}_q}$ and $b(N,N,\mathbb{F}_q)=b_e(G,N,\mathbb{F}_q)=|\mathcal{C}(N)/G_{\mathbb{F}_q}|=b(N,\mathbb{F}_q)$.
\end{proof}

 Let $N=(\langle(123)\rangle\oplus \langle(456)\rangle)\rtimes \langle(14)(25)(36)\rangle\leq S_6$ and let $\mathcal{Z}_N(\mathbb{Q},\mathbb{Q}(\zeta_3), X)$ be the number of isomorphism classes of $N$-extensions $K/\mathbb{Q}$ containing $\mathbb{Q}(\zeta_3)$ such that $d_{K/\mathbb{Q}}<X$. In \cite{kl2}, Kl\"uners shows that $\mathcal{Z}_N(\mathbb{Q}, X)\asymp X^{1/2}logX$ contradicting with Malle's conjecture which predicts $\mathcal{Z}_N(\mathbb{Q},X)\asymp X^{1/2}$. Indeed, he proves that $\mathcal{Z}_N(\mathbb{Q},\mathbb{Q}(\zeta_3), X)\asymp X^{1/2}logX$. With evident modifications, one gets the same result in function field case for $q=2$ $(mod3)$ \emph{i.e.} $\mathcal{Z}_N(\mathbb{F}_q,\mathbb{F}_{q^2},X)\asymp X^{1/2}logX$. The following corollary shows that our theorem gives the right asymptotic for the number of $N$-extensions in this case.

\begin{cor}\label{cor_coun}
$\mathcal{Z}'_N(\mathbb{F}_q,X)\asymp X^{1/2}logX$ where  $N=(\langle(123)\rangle\oplus \langle(456)\rangle)\rtimes \langle(14)(25)(36)\rangle$ and $q=2$ $(mod3)$.
\end{cor}

\begin{proof}
Given an $N$-extension $K/\mathbb{F}_q(t)$, maximal constant
intermediate subfield in its Galois closure
$\widehat{K/\mathbb{F}_q(t)}$ might be $\mathbb{F}_q(t)$,
$\mathbb{F}_{q^2}(t)$ or $\mathbb{F}_{q^6}(t)$ corresponding to
the normal subgroups of $N$, respectively, $N$,
$G_1:=\langle(123)\rangle\oplus \langle(456)\rangle$ or
$G_2:=\langle(123)(456)\rangle$. Therefore, we have
$$\mathcal{Z}'_N(\mathbb{F}_q(t),X)\asymp \mathcal{Z}'_{N,N}(\mathbb{F}_q,X)+\mathcal{Z}'_{G_1,N}(\mathbb{F}_q,X)+\mathcal{Z}'_{G_2,N}(\mathbb{F}_q,X).$$

 Using Theorem \ref{thm_main}, one can easily see that
$$\mathcal{Z}'_{N,N}(\mathbb{F}_q,X)\asymp X^{1/2} \text{\indent and \indent} \mathcal{Z}'_{G_2,N}(\mathbb{F}_q,X)\asymp X^{1/4}.$$\noindent We will show that $\mathcal{Z}'_{G_1,N}(\mathbb{F}_q,X)\asymp X^{1/2}logX$. Using the notation above, we have: $$\mathcal{C}(G)=\{(123),(132),(456),(465)\},$$ $a(G)=1/2$, $d=d'=2$, $e=1$ and $\tau=(14)(25)(36)$. We have two $T_e$-orbits: $\{(123),(465)\}$ and $\{(132),(456)\}$. Therefore, $b(G_1,N,\mathbb{F}_q)=b_e(G_1,N,\mathbb{F}_q)=2$. Hence, we are done.
\end{proof}

 Another interesting example in $\cite{kl2}$ is $N\cong (C_3\wr C_3)\times C_2\subseteq S_{18}$ where $C_3$ denotes the cyclic group of order $3$. Kl\"uners shows that $\mathcal{Z}_N(\mathbb{Q},X)<<X^{1/4}$. In function field case,  our main result gives us the exact asymptotic:

 \begin{cor}\label{cor_exam}
 $\mathcal{Z}'_N(\mathbb{F}_q(t),X)\asymp X^{1/4}$ where $q=2$ $(mod3)$ and $N\cong (C_3\wr C_3)\times C_2\subseteq S_{18}$.
 \end{cor}

 \begin{proof} Let's write $N=C_3\wr C_3 \times C_2= (\langle g_1\rangle \times \langle g_2\rangle \times \langle g_3\rangle)\rtimes \langle x\rangle \times \langle y\rangle$. We will just consider the normal subgroups $G$ with $a(G)=a(N)$. Given a $N$-extension $K/\mathbb{F}_q(t)$, $N$ has four such normal subgroups $G$ with cyclic quotient corresponding to possible maximal constant subextensions in the Galois closure of $K/\mathbb{F}_q(t)$. So, we have four cases.

{\it Case $1$}: $G=N$. In this case, $\mathcal{C}(G)=\{ \tilde{g}_1,\tilde{g}_1^2 \}$ where $\tilde{g}_i$ is the conjugacy class of $g_i$. So, $a(N)=1/4$. Since $q=2$ $(mod3)$ and $d=d'=e=1$, there is only one $T_e$-orbit, namely $\{ \tilde{g}_1,\tilde{g}_1^2 \}$. Thus, $b(N,N,\mathbb{F}_q)=1$ and $\mathcal{Z}'_{G,N}(\mathbb{F}_q,X)\asymp X^{1/4}.$

{\it Case $2$}: $G=C_3\wr C_3$. We have $[N:G]=2$, $a(G)=1/4$ and $d'=e=1$ since $y\in Cen_N(G)$. With the notation above, we have $\tau=y$ and $\mathcal{C}(G)=\{ \tilde{g}_1,\tilde{g}_1^2 \}$. Since $\tau\in Cen_N{G}$, the only $T_e$-orbit is $\{ \tilde{g}_1,\tilde{g}_1^2 \}$ and so $b(G,N,\mathbb{F}_q)=1$. Hence, $\mathcal{Z}'_{G,N}(\mathbb{F}_q,X)\asymp X^{1/4}$

{\it Case $3$}: $G=C_3\times C_3\times C_3$. We have $[N:G]=6$, $a(G)=1/4$, $d'=3$ and $\tau=xy$. We have
$\mathcal{C}(G)=\{ g_1,g_1^2,g_2,g_2^2,g_3,g_3^2 \}$ and $e=1$ or $e=2$. One can easily see
$$\mathcal{C}_1(g_i^{\epsilon})=\mathcal{C}_2(g^{\epsilon}_i)=\{g_1, g_1^2, g_2, g_2^2, g_3, g_3^2\}$$
\noindent for $i=1,2,3$ and $\epsilon=1,2.$ Therefore, there is only one $T_e$-orbit and $b(G,N,\mathbb{F}_q)=1$. Hence, $\mathcal{Z}'_{G,N}(\mathbb{F}_q,X)\asymp X^{1/4}.$

{\it Case $4$}: $G=C_3\times C_3\times C_3 \times C_2$. Using the same argument in the previous case, we see that $\mathcal{Z}'_{G,N}(\mathbb{F}_q,X)\asymp X^{1/4}$. QED

 \end{proof}

 The following corollary shows that our result coincides with Malle's conjecture for abelian groups, which is a result of Wright \cite{wr} in the case the base field is $\mathbb{Q}$. By Corollary \ref{cor_ell} and Lemma \ref{lemma_change}, we just need to show that extensions without a constant subextension is "more" than the extensions with a constant subextension (whose corresponding subgroup $G$ splits in $N$).

\begin{cor}\label{cor_abel}
Let $N$ be an abelian group. Then, $\mathcal{Z}'_{G,N}(\mathbb{F}_q,X)\ll \mathcal{Z}'_{N,N}(\mathbb{F}_q,X)$ for any normal subgroup $G$ with a cyclic complement.
\end{cor}

\begin{proof}
 Let $G$ be such a subgroup. Then, $a(G)\leq a(N)$. If $a(G) < a(N)$, then we are done. Assume $a(G)=a(N)$. Since $N$ is abelian, $d'=e=1$ and $\mathcal{C}_1(G,N,\mathbb{F}_q)=\mathcal{C}(G)/G_{\mathbb{F}_q}$. On the other hand, $\mathcal{C}(N,N,\mathbb{F}_q)=\mathcal{C}(N)/G_{\mathbb{F}_q}$ and $\mathcal{C}(G)\subseteq \mathcal{C}(N)$.  So, $b(G,N,\mathbb{F}_q)\leq b(N,N,\mathbb{F}_q)$ and we are done.
\end{proof}

 Note that one can revise Malle's conjecture (in other words, the constant $b(N,\mathbb{F}_q)$ in the conjecture) for function fields by just taking the maximum of all the constants $b(G,N,\mathbb{F}_q)$ over all normal subgroups $G\leq N$ with cyclic quotient and $a(G)=a(N)$. Notice that the constant $b(G,N,\mathbb{F}_q)$ is defined for any such subgroup $G$ (which is not necessarily split in $N$). 

 Now, we want to conclude our paper with the statement of the revised conjecture in number field case. First, we will introduce some notation. Let $k$ be a number field. We need to consider normal subgroups $G\leq N$ with abelian quotient $N/G$. Let $k^c$ be the maximal cyclotomic extension of $k$ (in a fixed algebraic closure $\bar{k}$). Set $C:=Cen_N(G)$. Given a cocycle $\varphi \in {\rm Hom}(G(k^c/k),N/G)$ and a conjugacy class $\bar{g}$ in $G$, define $\varphi${\it -twisted action} of $\sigma\in G_k$ on $\bar{g}$ by $$\sigma(\bar{g}):= \bar{g}^{\chi(\sigma)\varphi({\rm Res}(\sigma))^{-1}}$$ where ${\rm Res}:G_k\rightarrow G(k^c/k)$ is the restriction, $\chi$ is the cyclotomic character and $\varphi({\rm Res}(\sigma))^{-1}$ acts on the conjugacy class by conjugation. By analogy, we define $$b_{\varphi}(G,N,k):=|\mathcal{C}(G)/G_k|$$ where the $G_k$-action in question is the $\varphi$-twisted action. Finally, set $$b(G,N,k)=max\{b_{\varphi}(G,N,k): \varphi\in {\rm Hom}(G(k^c/k),N/G) \text{ and } \varphi \text{ is surjective} \}.$$ Based on our result, we propose the following correction to Malle's conjecture for any finite transitive subgroup $N\subseteq S_n$:
\ \\

\begin{con}
Fixing $N$ and $k$, set $$b(N,k)=max\{ b(G,N,k): N/G \text{ is abelian, and } a(G)=a(N)\}.$$ Then, we have $$\mathcal{Z}_N(k,X)\asymp X^{a(N)} (logX)^{b(N,k)-1}.$$
\end{con}
\ \\

 We remark that there are counterexamples in \cite{kl2} which are different than the one in Corollary \ref{cor_coun}, but in the same spirit. Using Theorem \ref{thm_main}, one can show that our result gives the right asymptotic in these cases.

 Summing it up, in the case of function fields, corollaries of our result shows that the conjecture above coincides with the existing results on Malle's conjecture and eliminates all the counterexamples known (to our knowledge) so far. Hence, it provides strong evidence in the favor of the conjecture above.

\ \\

\ \\

   \vspace{.2 in}
   \textsc{department of mathematics, university of wisconsin, 480 Lincoln Dr Madison wi 53706}

   \textit{E-mail address:} turkelli@math.wisc.edu

\end{document}